\def\dref#1{(\ref{#1})}
\def\disp{\displaystyle}
\theoremstyle{plain}
\theoremstyle{definition}
 \newtheorem{definition}{Definition}
\newtheorem{remark}{Remark}
\newtheorem{theorem}{Theorem}
\newtheorem{lemma}{Lemma}
\newtheorem{assumption}{Assumption}
\numberwithin{equation}{section}
\begin{document}

\title{Output Consensus of Heterogeneous Multi-Agent Systems with Mismatched Uncertainties and Measurement Noises: An ADRC Approach}

\author{Mengling Li, Ze-Hao Wu, Feiqi Deng, and  Zhi-Liang Zhao
\thanks{Mengling Li and  Ze-Hao Wu are with School of Mathematics and Big Data, Foshan University, Foshan 528000, China.
        {\tt\small  Emails: menglingli@fosu.edu.cn; zehaowu@amss.ac.cn} }
 \thanks{Feiqi Deng  is with Systems Engineering Institute, South China University of Technology, Guangzhou 510640, China.
 {\tt\small  Email: aufqdeng@scut.edu.cn}}
 \thanks{Zhi-Liang Zhao is with School of Mathematics and Information Science, Shanxi Normal University, Xi'an 710119, China.
 {\tt\small  Email: zhiliangzhao@snnu.edu.cn} }

 }

%

%
%

\markboth{}%
{Shell \MakeLowercase{\textit{et al.}}: Bare Demo of IEEEtran.cls
for Journals}
%



\maketitle

\begin{abstract}
In this paper, the practical output consensus problem for heterogeneous high-order leader-follower  multi-agent systems under directed communication topology
containing a directed spanning tree and subject to large-scale mismatched disturbances, mismatched uncertainties, and measurement noises is addressed.
By introducing a reversible state transformation without changing the output,
the actual total disturbance affecting output performance of each agent and matched
with the control input of the transformed system is extracted and estimated by extended state observers.
Then, the control protocols based on estimates of extended state observers, are designed by combing the output feedback control ones to obtain output
consensus and feedforward compensators to attenuating the total disturbance of each agent actively.
It is shown with a rigorous proof that the outputs of all followers can track practically the output of the leader,
and all the states of the leader-follower multi-agent systems are bounded. Some numerical simulations
are performed to verify the validity of the control protocols and theoretical result.
\end{abstract}

\begin{IEEEkeywords}
Heterogeneous multi-agent systems, output consensus, active disturbance rejection control, mismatched uncertainties, measurement noises.
\end{IEEEkeywords}

%
\IEEEpeerreviewmaketitle

\section{Introduction}
%
%
%
%

\IEEEPARstart{O}ver the last few years, cooperative control for multi-agent systems (MASs) has been getting great interests owing to its wide  prospect for
applications in sensor networks, cooperation of
multi-robot teams, coordination of unmanned aerial vehicles and so on \cite{Cao2015}.
Consensus, meaning that states or outputs of agents converge to the same value, is known as a fundamental problem in the field of cooperative control.
The consensus control of MASs has received considerable number of concerns in the control community, see for instance \cite{yu2010,yu2011,fan2014,limulti-agent,2nonlinearmulti-agent,3stochasticmulti-agent,zong2019,dengcwuzg2020}.
 An important consensus control strategy is the leader-follower coordination control among a set of agents which has been widely used in many applications such as  unmanned aerial vehicle formation \cite{In6}, communication systems \cite{In7}, vehicular networks \cite{In8}, power engineering \cite{powerapplication}, to name just a few.
 In additions, agents are usually under complex working environment subject to disturbances and uncertainties in practical applications.
Therefore, as yet, various anti-disturbance control approaches have been put to use in the leader-follower consensus of uncertain MASs,
 such as adaptive control \cite{liTAC2014,adptiveconl2017}, sliding mode control \cite{In13,slidecontrol2},  fuzzy adaptive dynamic programming \cite{zhangh2014},
 and the distributed internal model principle for output regulation \cite{outputregulation}, etc.

Nevertheless,  most of aforementioned proposed consensus control methods are the passive anti-disturbance control
 ones, achieving the disturbance rejection objective only by feedback control based on tracking errors.
 These consensus protocols are not fast and direct when coping with large scale disturbances and uncertainties,
 compared with other two representative active anti-disturbance control methods well-known as
the active disturbance rejection control (ADRC) \cite{ADRC} and disturbance observer-based control (DOBC) \cite{DOBC} with
extensive engineering applications.
ADRC, as a novel active anti-disturbance control technology, was initiated by Han \cite{ADRC}.
 The central constituent of  ADRC is the extended state observer (ESO),
 aiming at estimation of both unmeasured states and the total disturbance representing
 are total effects of all disturbances and uncertainties affecting system performance.
 Based on the estimate of the total disturbance,
 the ADRC controller, a compound one, is comprised of a feedback controller and
  a feedforward compensator via ESO, where the compensator takes great effect in the disturbance
  rejection, and the feedback controller can be designed individually to obtain the control objective of nominal systems.
On the whole, because of this estimation/cancellation
characteristic in the ADRC framework, the total disturbance can be actively and quickly rejected without ruining the performance of nominal systems,
and the ADRC controller is not conservative.

Recently the theoretical foundation of the active anti-disturbance control to the consensus problem for MASs with matched disturbances
and uncertainties have been well developed, see for instance \cite{Cao2015,ESOmulti-agentsystems,zou2022,limengling2022}.
However, in some practical processes, the performance of each agent may be affected by  the mismatched disturbances and uncertainties different from the control input channels \cite{hua2018}, such as magnetic levitation vehicle systems \cite{mismatched1} and missile systems \cite{mismatched2}. The recent progresses concerning the ADRC approach to
stabilization and output tracking of uncertain nonlinear systems with mismatched disturbances and uncertainties can
be founded in \cite{xue2014,guo2017,zhao2017,chens2020,wu2021} and the references therein, and it has been developed to the MASs counterpart thereafter.
For example, without considering system uncertainties and under the assumption that
states are measurable, the output consensus control problem for homogeneous
higher-order leader-follower MASs with mismatched disturbances has been investigated by designing
 feedforward-feedback composite consensus controls based on the sliding-mode control (SMC) and DOBC methods \cite{misMAS1},
and based on the backstepping strategy and a generalized proportional-integral observer \cite{misMAS2};
Under the assumption that states are measurable, the formation tracking
problem for nonaffine nonlinear homogeneous MASs with communication delays and system uncertainties has been addressed in \cite{Yandyue};
Under a connected undirected network, the consensus problem has been investigated
 via the ADRC method for nonlinear heterogeneous MASs  subject to
bandwidth limitation, mismatched disturbances and uncertainties \cite{ran2021}.
However, to the authors' knowledge, for nonlinear homogeneous MASs under directed communication topology and
with large-scale mismatched disturbances and uncertainties, a comprehensive output consensus protocol design via the active anti-disturbance control approaches
and the theoretical analysis are still not resolved, and few relevant literature addresses measurement noises.

Motivated by the current research status, in this paper we apply the
ADRC approach  to solve  the practical output consensus
problem for a kind of nonlinear heterogeneous  high-order leader-follower  MASs with mismatched disturbances,
mismatched uncertainties, and measurement noises. The contribution and novelty
are twofold as follows: a) The nonlinear heterogeneous high-order leader-follower MASs under directed communication topology are
subject to disturbances and uncertainties in large scale including unmeasurable states,
mismatched disturbances, mismatched uncertainties, and measurement noises, only with the output
of each agent be available for ADRC designs; b) the ADRC consensus protocols are designed to
obtain disturbance rejection in an active way and practical output consensus of the uncertain MASs,
with a rigorous theoretical foundation be presented.

The structure of this paper will be proceeded as below. The problem formulation and preliminaries are
given in Section \ref{Se2}.
The ADRC consensus protocols design and the main practical output
consensus result with its proof be presented in Section \ref{Se3}. In Section \ref{se4}, several
simulations are demonstrated to authenticate the rationality of the
ADRC consensus protocols and the theoretical result, and ultimately the concluding remarks
are stated in Section \ref{se5}.

\section{Problem formulation and preliminaries}\label{Se2}

Throughout the paper, some mathematical notations are agreed as follows.
$\lambda_{\min}(Z)$ and  $\lambda_{\max}(Z)$  represent the minimum and maximum eigenvalues of a
 matrix $Z$, respectively; $|\cdot|$ represents the absolute value of scalars, and
$\|\cdot\|$ denotes the $2$-norm of matrices or vectors;
${ 1}_{m}$ and ${ 0}_{m}$ denote, respectively, the $m\times 1$
column vector with all elements be ones and zeros; ${ 0}_{m\times n}$ denotes the zero matrix with  $m$ rows and $n$ columns, and $\mathbb{I}_{m}$ denotes the $m$ identity
matrix; $\mathrm{diag}(\mathfrak{p}_{1},\cdots,\mathfrak{p}_{m})$ implies the diagonal matrix with
diagonal entries be $\mathfrak{p}_{1},\cdots,\mathfrak{p}_{m}$;
$\otimes$ denotes the Kronecker product satisfying the following several properties for some matrices $\mathbb{A}_{i}\;(i=1,2,3,4)$ with appropriate dimensions:
\begin{eqnarray*}
&&\hspace{-0.5cm}(\mathbb{A}_{1}\otimes \mathbb{A}_{2})^\top=\mathbb{A}^\top_{1}\otimes \mathbb{A}^\top_{2},\;\; (\mathbb{A}_{1}\otimes \mathbb{A}_{2})^{-1}=\mathbb{A}^{-1}_{1}\otimes \mathbb{A}^{-1}_{2}, \cr&&\hspace{-0.5cm} (\mathbb{A}_{1}+\mathbb{A}_{2})\otimes \mathbb{A}_{3}=\mathbb{A}_{1}\otimes \mathbb{A}_{3}+\mathbb{A}_{2}\otimes \mathbb{A}_{3}, \cr&&\hspace{-0.5cm} \mathbb{A}_{1}\otimes (\mathbb{A}_{2}+\mathbb{A}_{3})=\mathbb{A}_{1}\otimes \mathbb{A}_{2}+\mathbb{A}_{1}\otimes \mathbb{A}_{3},\cr&& \hspace{-0.5cm}(\mathbb{A}_{1}\otimes \mathbb{A}_{2})(\mathbb{A}_{3}\otimes \mathbb{A}_{4})=\mathbb{A}_{1}\mathbb{A}_{3}\otimes \mathbb{A}_{2}\mathbb{A}_{4}.
\end{eqnarray*}
These conventional properties will be used frequently in the following proof.

Next,  some mathematical definitions and simple explanations for topology graph are given. Consider a MAS with $m(\geq 1)$ followers agent(s) and one leader agent. $\mathcal{N}=\{1,\cdots,m\}$ and $\mathcal{M}=\{0\}$ stand for the set of followers and leader, respectively. $\bar{\mathcal{N}}=\mathcal{N}\cup\mathcal{M}$. The network topology among the followers and leader  is symbolised by a directed graph $\mathcal{G}=\{\mathds{V}, \mathds{E}\}$, where $\mathds{V}=\{\mathds{V}_0,\mathds{V}_1,\cdots,\mathds{V}_{m}\}$ indicates the set of vertices
 denoting the above $m+1$ agents and $\mathds{E}\subseteq\mathds{V}\times\mathds{V}$ designates the set of edges of the graph. The directed edge $\mathds{E}_{ij}=(\mathds{V}_i,\mathds{V}_j)$ indicates that the vertex $\mathds{V}_j$ can
 receive information from vertex $\mathds{V}_i$.  Denote $\mathcal{A}=[a_{ij}]\in \mathbb{R}^{(m+1)\times (m+1)}$ by the weighted adjacency matrix of $\mathcal{G}$, where $a_{ij}=1$ is equivalent to $\mathds{E}_{ji}\in\mathds{E}$, otherwise $a_{ij}=0$. And for any $i\in \bar{\mathcal{N}}$, $a_{ii}=0$. Let $\mathcal{N}_i=\{\mathds{V}_j\in \mathds{V}|\mathds{E}_{ji}\in\mathds{E}\}$ be the set of in-neighbors of vertex $\mathds{V}_i$ and $D=\mathrm{diag}\{D_0,\cdots, D_{m}\}\in \mathbb{R}^{(m+1)\times (m+1)}$ represent the in-degree matrix with $D_i=\sum_{j\in \mathcal{N}_i}a_{ij}$ being the weighted in-degree of agent $i$.
The Laplacian matrix is defined as $L=[l_{ij}]=D-\mathcal{A}$ that can be represented as $L=\left[\begin{matrix}
0 & 0_{1\times m} \\
L_0 &L_1
\end{matrix}\right]$ with $L_0\in \mathbb{R}^{m\times 1}$ and $L_1\in \mathbb{R}^{m\times m}$
because the leader has no in-neighbors. $a_{i0}>0$ indicates that the $i$-th follower agent can obtain the information of the leader, otherwise $a_{i0}=0$.

For the topology in this paper, we give the following assumption.

\begin{assumption}\label{a1}
The topology $\mathcal{G}$ contains a directed spanning tree and the leader is the root.
\end{assumption}

\begin{lemma}\cite{liTAC2014}\label{le1}
Under Assumption \ref{a1}, $L_1$ is a nonsingular diagonally dominant $M$-matrix, so there is a positive definite diagonal matrix $W=\mathrm{diag}\{W_1,\cdots,W_m\}$, where $(W_1,\cdots,W_m)^\top=(L_1^\top)^{-1}1_m$, such that $WL_1+L_1^\top W$ is positive definite.
\end{lemma}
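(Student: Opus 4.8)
\emph{Proof plan.} The plan is to verify the two claims of the lemma in turn: that $L_1$ is a nonsingular diagonally dominant $M$-matrix, and that, with $W$ formed from $(L_1^\top)^{-1}1_m$, the symmetric matrix $WL_1+L_1^\top W$ is positive definite.

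First I would read off the combinatorial structure of $L_1$. Writing $L_1=\Delta-B$ with $\Delta=\mathrm{diag}\{D_1,\dots,D_m\}$ and $B=[a_{ij}]_{i,j\in\mathcal N}\ge 0$, the off-diagonal entries of $L_1$ are the nonpositive numbers $-a_{ij}$, so $L_1$ is a $Z$-matrix, and row $i$ obeys $(L_1)_{ii}-\sum_{j\ne i}|(L_1)_{ij}|=D_i-\sum_{j\in\mathcal N,\,j\ne i}a_{ij}=a_{i0}\ge 0$, i.e.\ $L_1$ is diagonally dominant, strictly in every row with $a_{i0}>0$. Under Assumption \ref{a1} each follower has at least one in-neighbour, so $D_i\ge 1$, $\Delta$ is invertible, and $P:=\Delta^{-1}B\ge 0$ has $i$-th row sum $1-a_{i0}/D_i\le 1$. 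I would then show $\rho(P)<1$: the digraph associated with $P$ is $\mathcal G$ restricted to the followers with all arcs reversed, and since $\mathcal G$ contains a spanning tree rooted at the leader, from any follower one can trace edges of $\mathcal G$ back to a follower with $a_{i0}>0$; hence in the digraph of $P$ every vertex reaches a ``leaky'' vertex (row sum $<1$), so iterating $P$ uniformly contracts the row sums and $\|P^N\|_\infty<1$ for some $N$, giving $\rho(P)<1$. Therefore $I-P$ is invertible with $(I-P)^{-1}=\sum_{k\ge 0}P^k\ge 0$, so $L_1=\Delta(I-P)$ is invertible and $L_1^{-1}=(I-P)^{-1}\Delta^{-1}\ge 0$; a $Z$-matrix with a nonnegative inverse is precisely a nonsingular $M$-matrix (its spectrum then lies in the open right half-plane as well). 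In the final write-up this step can alternatively be attributed to \cite{liTAC2014}.

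Next I would check that $W$ is legitimately a positive definite diagonal matrix. Since $(W_1,\dots,W_m)^\top=(L_1^\top)^{-1}1_m=(L_1^{-1})^\top 1_m$, the scalar $W_i$ is the sum of the entries of the $i$-th column of $L_1^{-1}$; as $L_1^{-1}\ge 0$ is nonsingular, no column vanishes, so $W_i>0$ for all $i$ and $W=\mathrm{diag}\{W_1,\dots,W_m\}$ is positive definite. Also, by definition $L_1^\top(W_1,\dots,W_m)^\top=1_m$, equivalently $(W_1,\dots,W_m)L_1=1_m^\top$, and the zero row sums of the Laplacian give $L_11_m=-L_0$ with $(-L_0)_i=a_{i0}$. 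With these two identities, positive definiteness of $S:=WL_1+L_1^\top W$ is a short computation: $S$ is symmetric, and for $i\ne j$ its $(i,j)$ entry is $W_i(L_1)_{ij}+(L_1)_{ji}W_j=-(W_ia_{ij}+W_ja_{ji})\le 0$, so $S$ is a symmetric $Z$-matrix, while its $i$-th row sum equals $W_i(L_11_m)_i+\big((W_1,\dots,W_m)L_1\big)_i=W_ia_{i0}+1\ge 1>0$. Hence $S_{ii}=W_ia_{i0}+1+\sum_{j\ne i}|S_{ij}|>\sum_{j\ne i}|S_{ij}|$ and $S_{ii}>0$ for every $i$, so $S$ is symmetric with positive diagonal and strictly diagonally dominant; by Gershgorin's circle theorem $S=WL_1+L_1^\top W$ is positive definite, completing the proof.

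The one genuinely delicate step is the first: converting the spanning-tree assumption into the analytic facts that $L_1$ is invertible and $L_1^{-1}\ge 0$. This is where the graph hypothesis is really used, and the reachability/Perron--Frobenius argument (or, equivalently, an argument on the maximal-modulus coordinates of a hypothetical element of $\ker L_1$) has to be set up carefully, in particular keeping track of the orientation of edges under the convention $a_{ij}=1\Leftrightarrow \mathds E_{ji}\in\mathds E$. Once $L_1^{-1}\ge 0$ is in hand, the remainder is exactly the diagonal-dominance bookkeeping already prepared in the paper.
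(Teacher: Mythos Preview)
Your argument is correct, but note that the paper does not actually supply a proof of Lemma~\ref{le1}: the lemma is stated with an explicit citation to \cite{liTAC2014} and then used as a known structural fact, so there is no in-paper proof to compare against. Your write-up is therefore a self-contained substitute rather than a reconstruction.

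As a proof, what you outline is sound. The reachability/substochastic argument for $\rho(\Delta^{-1}B)<1$ is the right way to convert the spanning-tree hypothesis into invertibility of $L_1$ with $L_1^{-1}\ge 0$, and you have the edge orientation right under the paper's convention $a_{ij}=1\Leftrightarrow \mathds{E}_{ji}\in\mathds{E}$: the directed path $0\to v_1\to\cdots\to v_k=i$ in the spanning tree gives $P_{v_{s+1},v_s}>0$ for each $s$, so in the digraph of $P$ vertex $i$ reaches the leaky vertex $v_1$. The second half is particularly clean: from $L_1^\top(W_1,\dots,W_m)^\top=1_m$ and $(L_11_m)_i=a_{i0}$ you get that the $i$-th row sum of $S=WL_1+L_1^\top W$ equals $W_ia_{i0}+1\ge 1$, so $S$ is a symmetric $Z$-matrix that is strictly diagonally dominant with positive diagonal, hence positive definite by Gershgorin. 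This is exactly the standard $M$-matrix argument behind the cited result, carried out explicitly.
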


The above lemma is a very useful lemma for matrix $L_1$ and will facilitate our analysis. In this paper, we consider the heterogeneous high-order MASs containing $m$ agents, and the
dynamics of the $i$-th agent $(i\in \{1,\cdots,m\})$ subject to
mismatched disturbances, mismatched uncertainties, and measurement noises are described as
\begin{equation}\label{s1}
\hspace{-0.2cm}\left\{\begin{array}{l}
\dot{x}_{i1}(t)=x_{i2}(t)+h_{i1}(x_{i1}(t),d_{i}(t)),\cr
\hspace{1.2cm}  \vdots \cr
\dot{x}_{i,n-1}(t)=x_{in}(t)+h_{i,n-1}(x_{i1}(t),\cdots,x_{i,n-1}(t),d_{i}(t)),\\
\dot{x}_{in}(t)=h_{i,n}(x_{i1}(t),\cdots,x_{in}(t),d_{i}(t))+u_i(t),\\
y_{i}(t)=x_{i1}(t)+w_{i}(t), \;\; i\in \{1,\cdots,m\},
\end{array}\right.
\end{equation}
where $x_i(t)=(x_{i1}(t),\cdots, x_{in}(t))^\top\in \mathbb{R}^n$, $d_{i}(t)\in \mathbb{R}$, and $w_{i}(t)\in \mathbb{R}$ are the system state, external disturbance, and measurement noise
of the $i$-th agent, respectively;
 $h_{ij}:\mathbb{R}^{j+1}\rightarrow \mathbb{R}\;(j=1,\cdots,n-1)$ and $h_{in}:\mathbb{R}^{n+1}\rightarrow \mathbb{R}$ are unknown system functions;
$u_i(t)\in \mathbb{R}$ and $y_i(t)\in \mathbb{R}$ are control input and output measurement of the $i$-th agent, respectively;
 $x(t)\triangleq(x_1^\top(t),\cdots,x_m^\top(t))^\top$ is the state of the MASs.
The dynamics of the leader is described as
\begin{equation}\label{s4}
\begin{cases}
\dot{x}_{0k}(t)=x_{0,k+1}(t),\; k=1,\cdots,n-1,\\
\dot{x}_{0,n}(t)=u_0(t),\\
y_0(t)=x_{01}(t),
\end{cases}
\end{equation}
where $u_0(t)$ is the control input of the above leader system, and   $x_{0}(t)=(x_{01}(t),\cdots,x_{0n}(t))^{\top}$.

The aim of this paper is to design consensus protocols based on the ADRC approach to enable that the outputs of all the followers can track practically the output of the leader, and all  states of the leader-follower MASs are bounded.

\section{ADRC Consensus Protocols design and the main result}\label{Se3}
 
The actual total disturbance of each agent that will be estimated by ESO contains high order derivatives of the mismatched disturbances, mismatched uncertainties, and measurement
noises. To facilitate the following ESOs designs and theoretical analysis, the system functions $h_{ij}$'s, external disturbances $d_{i}(t)$'s, and measurement noises $w_{i}(t)$'s
are required to satisfy some smooth assumption as follows.

\begin{assumption}\label{a22}
$h_{ij}\in C^{n+1-j}(\mathbb{R}^{j+1};\mathbb{R})$, and $d_{i}(t)$'s and $w_{i}(t)$'s are  $n$-th continuously differentiable and
 $(n+1)$-th continuously differentiable with regard to the  $t$, respectively.
\end{assumption}

By Assumption \ref{a22}, we can introduce the following state transformation by setting
\begin{equation}\label{s2}
\begin{cases}
\bar{x}_{i1}(t)= x_{i1}(t)+w_{i}(t),\\
\bar{x}_{ij}(t)=x_{ij}(t)+\disp \sum_{l=1}^{j-1}h_{i,j-l}^{(l-1)}(x_{i1}(t),\cdots,x_{i,j-l}(t),d_{i}(t))\\
\qquad\qquad +w_{i}^{(j-1)}(t),\;\;
j=2,\cdots,n,\; i=1,\cdots,m,
\end{cases}
\end{equation}
where $h_{i,j-l}^{(l-1)}(x_{i1}(t),\cdots,x_{i,j-l}(t),d_{i}(t))$ represent the $(l-1)$-th
 derivatives of $h_{i,j-l}(x_{i1}(t),\cdots,x_{i,j-l}(t),d_{i}(t))$ with regard to the time variable $t$
 and $h_{i,j-1}^{(0)}(x_{i1}(t),\cdots,x_{i,j-1}(t),d_{i}(t))\triangleq h_{i,j-1}(x_{i1}(t),\cdots,x_{i,j-1}(t),d_{i}(t))$, similarly hereinafter,
 and we set $\bar{x}_{i}(t)=(\bar{x}_{i1}(t),\cdots,\bar{x}_{in}(t))^{\top}$ and $\bar{x}(t)=(\bar{x}_{1}(t),\cdots,\bar{x}_{m}(t))^{\top}$ in what follows.
 It follows easily from Assumption \ref{a22} and the lower triangular structure of MASs \dref{s1} that there are continuous functions $\phi_{ij}$ such that

 \begin{eqnarray}\label{computations}
 &&\hspace{-1.2cm} h_{i,j-l}^{(l-1)}(x_{i1}(t),\cdots,x_{i,j-l}(t),d_{i}(t))\cr&&\hspace{-1.2cm}=\phi_{ij}(x_{i1}(t),\cdots,x_{i,j-1}(t),d_{i}(t),\cdots,d^{(l-1)}_{i}(t)).
 \end{eqnarray}
Thus, it can be further obtained that there are continuous functions $\psi_{ij}\;(i=1,\cdots,m,j=1,\cdots,n)$, such that
\begin{equation}\label{s22}
\begin{cases}
x_{i1}(t)=\bar{x}_{i1}(t)-w_{i}(t)\triangleq \psi_{i1}(\bar{x}_{i1}(t),w_{i}(t)),\\
x_{i2}(t)=\bar{x}_{i2}(t)-h_{i1}(x_{i1}(t),d_{i}(t))-\dot{w}_{i}(t)\\\qquad
\triangleq \psi_{i2}(\bar{x}_{i1}(t),\bar{x}_{i2}(t),d_{i}(t),\dot{w}_{i}(t)),\cr
\hspace{1.2cm}  \vdots \cr
x_{in}(t)=\bar{x}_{in}(t)-\sum_{l=1}^{n-1}h_{i,n-l}^{(l-1)}(x_{i1}(t),\cdots,\\
\qquad\qquad x_{i,n-l}(t),d_{i}(t))-w_{i}^{(n-1)}(t)\\
\qquad\triangleq \psi_{in}(\bar{x}_{i1}(t),\cdots,\bar{x}_{in}(t),d_{i}(t),\cdots,d_{i}^{(n-2)}(t),\\
\qquad\qquad\quad w_{i}(t),\cdots,w_{i}^{(n-1)}(t)),\;
i=1,\cdots,m,
\end{cases}
\end{equation}
which can be equivalently expressed as
\begin{eqnarray}\label{26equnation}
&&\hspace{-1cm}x_{i}(t)=\psi_{i}(\bar{x}_{i1}(t),\cdots,\bar{x}_{in}(t),d_{i}(t),\cdots,d_{i}^{(n-2)}(t),\cr&&\hspace{0.2cm} w_{i}(t),\cdots,w_{i}^{(n-1)}(t)),\;i=1,\cdots,m,
\end{eqnarray}
where
\begin{eqnarray}
\psi_{i}\triangleq (\psi_{i1},\psi_{i2},\cdots,\psi_{in})^{\top}.
\end{eqnarray}

 With $(\bar{x}_{i1}(t),\cdots,\bar{x}_{in}(t))$ being the new state variables,  MASs (\ref{s1})
 subject to mismatched disturbances, mismatched uncertainties, and measurement noises is transformed to be
\begin{equation}\label{s3}
\begin{cases}
\dot{\bar{x}}_{i1}(t)=\bar{x}_{i2}(t),\cr
\hspace{1.2cm}  \vdots \cr
\dot{\bar{x}}_{i,n-1}(t)=\bar{x}_{in}(t),\\
\dot{\bar{x}}_{in}(t)=\bar{x}_{i,n+1}(t)+u_i(t),\\
y_i(t)=\bar{x}_{i1}(t),\;\; i\in \{1,\cdots,m\},
\end{cases}
\end{equation}
which is subject to an actual total disturbance (extended state) matched with
the control input given by
\begin{eqnarray}\label{totaldisturbance}
&&\hspace{-1cm}\bar{x}_{i,n+1}(t)\triangleq h_{in}(x_{i1}(t),\cdots,x_{in}(t),d_{i}(t))\cr&&\hspace{-1cm}+\sum_{l=1}^{n-1}h_{i,n-l}^{(l)}(x_{i1}(t),\cdots,x_{i,n-l}(t),d_{i}(t))+w_{i}^{(n)}(t).
\end{eqnarray}
Similar to \dref{computations}, it follows from Assumption \ref{a22} and the lower triangular structure of MASs \dref{s1} that there exist continuous function $\varphi_{i}$ such that
\begin{eqnarray}\label{exprssiondisutranve}
&&\bar{x}_{i,n+1}(t)=\varphi_{i}(x_{i1}(t),\cdots,x_{in}(t),d_{i}(t),\cdots,\cr&&\hspace{1.8cm}d^{(n-1)}_{i}(t))+w_{i}^{(n)}(t).
\end{eqnarray}

To be emphasized, aforementioned transformation keeps the same output $y_{i}(t)$
between   MASs \dref{s1}  and MASs (\ref{s3}), and $\bar{x}_{i,n+1}(t)$ is the actual total disturbance affecting the output $y_{i}(t)$
of $i$-th agent, which can be observed from the output $y_{i}(t)$. Actually, the uncertain MASs (\ref{s3}) is said to exactly observable
if $y_{i}(t)\equiv 0, u_{i}(t)\equiv 0, \;\forall t\in [0,\infty)\Rightarrow \bar{x}_{i,n+1}(t)\equiv 0,\;\forall t\in [0,\infty)$
and $\bar{x}_{ij}(0)=0,\;j=1,\cdots,n$, see, e.g., \cite[p.5, Definition 1.2]{exactobserval}. Therefore, next we design a set of ESOs
to estimate  the total disturbance $\bar{x}_{i,n+1}(t)$ and states of MASs  (\ref{s3}) in real time as follows
\begin{equation}\label{s6}
\begin{cases}
\dot{\hat{\bar{x}}}_{i1}(t)=\hat{\bar{x}}_{i2}(t)+k_1r(y_i(t)-\hat{\bar{x}}_{i1}(t)),\cr
\hspace{1.2cm}  \vdots \cr
\dot{\hat{\bar{x}}}_{i,n-1}(t)=\hat{\bar{x}}_{in}(t)+k_{n-1}r^{n-1}(y_i(t)-\hat{\bar{x}}_{i1}(t)),\\
\dot{\hat{\bar{x}}}_{in}(t)=\hat{\bar{x}}_{i,n+1}(t)+k_nr^n(y_i(t)-\hat{\bar{x}}_{i1}(t))+u_i(t),\\
\dot{\hat{\bar{x}}}_{i,n+1}(t)=k_{n+1}r^{n+1}(y_i(t)-\hat{\bar{x}}_{i1}(t)),\;
 i=1,\cdots,m,
\end{cases}
\end{equation}
where $r$ is the tuning gain, the constants $k_i$'s are selected such that
\begin{equation}\label{umatrix}
U\triangleq\left[\begin{matrix}
-k_1 &1 &0&\cdots&0\\
-k_2&0&1&\cdots&0\\
\vdots&\vdots&\vdots&\vdots&\vdots\\
-k_{n+1}&0&0&\cdots&0
\end{matrix}\right]
\end{equation}
is Hurwitz, and $\hat{\bar{x}}_{ij}(t)\;(i=1,\cdots,m,j=1,\cdots,n)$ are the estimates of $\bar{x}_{ij}(t)$.
From here and the whole paper, we always drop $r$ for the solutions of \dref{s6} and other systems
by abuse of notation without confusion.

Set
\begin{equation*}
A=\left[\begin{matrix}
0^{(n-1)\times1}& \mathbb{I}_{n-1}\\
0&0^{1\times (n-1)}
\end{matrix}\right]\in \mathbb{R}^{n\times n}, B=\left[\begin{matrix}
0\\
\vdots\\ 0 \\
1
\end{matrix}\right]\in \mathbb{R}^{n\times 1}.
\end{equation*}

It can be easily checked that $(A,B)$ is  stabilizable, and then
 there is a positive definite matrix $P\in \mathbb{R}^{n\times n}$ to the following Riccati equation:
\begin{equation}\label{p1}
A^\top P+PA-\mu_0 PBB^\top P=-\mathbb{I}_{n},
\end{equation}
where $\mu_0\triangleq\mu\lambda_{\min}(W^{-1})$ with $\mu\triangleq\lambda_{\min}(WL_1+L_1^TW)$ and
 $W$ be specified in Lemma \ref{le1}.

 Set
\begin{equation}\label{ens}
\vartheta_{iq}(t)=\sum_{j\in {\mathcal{N}_{i}}}a_{ij}(\hat{\bar{x}}_{iq}(t)-\hat{\bar{x}}_{jq}(t))+a_{i0}(\hat{\bar{x}}_{iq}(t)-x_{0q}(t)),
\end{equation}
where $q=1,\cdots,n$, and we set $\vartheta_i=(\vartheta_{i1},\cdots,\vartheta_{in})^\top$.

Then, the ADRC consensus protocols can be designed as
\begin{equation}\label{con}
u_i(t)=\mathrm{sat}_{M}(K\vartheta_i(t))-\mathrm{sat}_{N_{i}}(\hat{\bar{x}}_{i,n+1}(t))+u_0 (t),
\end{equation}
for $i=1,\cdots,m$, where $K=-B^{\top}P\in \mathbb{R}^{1\times n}$ is the output feedback control gain vector with $P$ be specified in (\ref{p1}), $M$ and
$N_{i}$'s are positive constants specified respectively in  \dref{Mconstant} and  \dref{Nconstant}, and the
continuous differentiable saturation odd function $\mathrm{sat}_{\Theta}: \mathbb{R}\rightarrow \mathbb{R}$ for any given $\Theta>0$ is defined by
\begin{equation}\label{saturaionfunct}
\mathrm{sat}_{\Theta}(s)=\begin{cases}
s, \qquad\qquad \qquad\qquad\qquad\quad0\leq s\leq {\Theta},\\
-\frac{1}{2}s^2+({\Theta}+1)s-\frac{1}{2}{\Theta}^2,\;\;\;{\Theta}<s\leq {\Theta}+1,\\
{\Theta}+\frac{1}{2}, \qquad\qquad \qquad\qquad\quad s>{\Theta}+1.
\end{cases}
\end{equation}
The ADRC consensus protocols \dref{con} are composed of a common output feedback control $\mathrm{sat}_{M}(K\vartheta_i(t))$
to achieve the output consensus of MASs, a compensator $-\mathrm{sat}_{N_{i}}(\hat{\bar{x}}_{i,n+1}(t))$
based on the ESOs \dref{s6} to compensate for the actual total disturbance, and a feedforward signal $u_{0}(t)$ obtained from
the leader. Compared with conventional passive anti-disturbance consensus protocols using only feedback control designs,
the ADRC consensus protocols \dref{con} play an important role in disturbance rejection actively by the compensator $-\mathrm{sat}_{N_{i}}(\hat{\bar{x}}_{i,n+1}(t))$
based on ESOs \dref{s6}.
There are two main reasons for using the saturation functions $\mathrm{sat}_{M}(\cdot)$ and $\mathrm{sat}_{N_{i}}(\cdot)$ in the ADRC consensus protocols \dref{con}.
On the one hand, it can avoid the peaking value phenomenon near the initial state brought about by the possible high gain in ESO \dref{s6}.
On the other hand, since the ADRC consensus protocols \dref{con} are always bounded because of the saturation functions,
it will be advantageous to the following theoretical analysis.

The practical output consensus to be obtained is defined as follows.
\begin{definition}\label{defdefint1}
The practical output consensus of the leader-follower MASs (\ref{s1})-(\ref{s4}) is said to be solved
if the consensus protocols (\ref{con}) dependent on the
tuning observer gain $r$ are available such that for any $\varepsilon>0$ and any
initial values in a given compact set, there exists $r^*\geq 1$, such that for any $r\geq r^{*}$, there holds
\begin{equation}\label{outputdefintion}
|y_i(t)-y_{0}(t)|\leq \varepsilon,\;\forall t\geq t_{r}, i=1,\cdots,m,
\end{equation}
where $t_{r}$ is a positive constant dependent on $r$.
\end{definition}

\begin{remark}
Definition \ref{defdefint1} means directly that
for any $\varepsilon>0$, $\disp\lim_{t\rightarrow+\infty}|y_i(t)-y_{0}(t)|\leq\varepsilon$
when $r$ is tuned to be large accordingly. More specifically,
the practical output consensus indicates that the errors between outputs of followers
and output of the leader can be ensured to be arbitrarily close to zero at the steady state,
provided that corresponding $r$-dependent ESOs-based consensus protocols (\ref{con}) are designed.
\end{remark}

To achieve practical output consensus and boundedness of the leader-follower MASs (\ref{s1})-(\ref{s4}), the following assumptions
are required additionally.

\begin{assumption}\label{as3}
There are a few positive constants $\alpha_1,\alpha_2,\alpha_{3i}$ and bounded control input $u_0(t)$ such that
$\|x(0)\|\leq \alpha_1, \|x_0(t)\|\leq \alpha_2$ for all $t\geq 0$, and
$|d_{i}^{(l)}(t)|\leq \alpha_{3i}$, $|w_{i}^{(j)}(t)|\leq \alpha_{3i}$
for all $t\geq 0$, where $i=1,\cdots,m,l=0,\cdots,n,j=0,\cdots,n+1$.
\end{assumption}
\begin{remark}
The following three aspects are involved in Assumption \ref{as3}. Firstly,
the initial value should be in a given compact set, indicating that
the practical output consensus problem for the leader-follower MASs (\ref{s1})-(\ref{s4}) can only be solved in the semi-global sense.
Secondly, the state and control input of the leader are required to be bounded.
Finally, the boundedness of derivatives of external disturbances and measurement noises in Assumption \ref{as3} is
by reason of the fact that the actual total disturbance \dref{totaldisturbance} of each agent is to be estimated and compensated in the closed-loop.
\end{remark}

Based on the transformation \dref{s2} and Assumptions \ref{a22}-\ref{as3}, it can be easily obtained  that $\|\bar{x}(0)\|\leq \alpha_4$
for some positive constant $\alpha_4$.

Set
\begin{eqnarray}
V_{1i}(\varrho_{i})=W_i\varrho_{i}^{\top}P\varrho_{i},\; \forall \varrho_{i}\in \mathbb{R}^{n},\; i=1,\cdots,m,
\end{eqnarray}
with $P$ be specified in \dref{p1}, and it can be easily obtained that
\begin{equation}
\begin{split}
&V_1(\varrho)\triangleq \varrho^{\top}(W\otimes P)\varrho=\sum_{i=1}^mV_{1i}(\varrho_{i}),\cr &\forall \varrho=(\varrho^{\top}_{1},\cdots,\varrho^{\top}_{m})^{\top}\in \mathbb{R}^{nm}.
\end{split}
\end{equation}
In addition, since $V_{1}$ is a radially unbounded continuous function, we can define two compact sets in $\mathbb{R}^{nm}$ as follows:
\begin{equation}\label{compatts}
\begin{split}
&\Omega_{1}=\{z\in \mathbb{R}^{nm}: V_{1}(z)\leq \max_{s\in \mathbb{R}^{nm},\|s\|\leq \alpha_1+\alpha_2+\alpha_4}V_{1}(s)+1\},\\
&\Omega_{2}=\{z\in \mathbb{R}^{nm}: V_{1}(z)\leq \max_{s\in \mathbb{R}^{nm},\|s\|\leq \alpha_1+\alpha_2+\alpha_4}V_{1}(s)\}.\\
\end{split}
\end{equation}
The constant $M$ in the consensus protocols \dref{con} can be specified as
\begin{equation}\label{Mconstant}
M\triangleq \|L_1\otimes K \|\cdot\sup\{\|z\|:z\in \Omega_{1}\}.
\end{equation}

Define
\begin{equation}\label{comfpat3}
\Omega_{3}=\{z\in \mathbb{R}^{nm}: \|z\|\leq \frac{M}{\|L_1\otimes K \|}+\|{1}_m\otimes \mathbb{I}_n\|\alpha_{2}\},
\end{equation}
and for $i=1,\cdots,m$, we define
\begin{equation}
\Omega_{3i}=\{z_{i}\in \mathbb{R}^{n}: z=(z^{\top}_{1},\cdots,z^{\top}_{m})^{\top}\in \Omega_{3}\}.
\end{equation}
Set $\mathcal{A}_{i}=[-\alpha_{3i},\alpha_{3i}]$, and
\begin{equation}\label{qqiii}
\begin{split}
&\hspace{-0.2cm}Q_{i}=\sup_{(\bar{x}_{i1},\cdots,\bar{x}_{in},d_{i},\cdots,d_{i}^{(n-2)},w_{i},\cdots,w_{i}^{(n-1)})^{\top}\in \Omega_{3i}\times \mathcal{A}^{2n-1}_{i}}|\psi_{i}(\bar{x}_{i1},\cr&\hspace{0.4cm}\cdots,
\bar{x}_{in},
d_{i},\cdots,d_{i}^{(n-2)},w_{i},\cdots,w_{i}^{(n-1)})|,\;i=1,\cdots,m.
\end{split}
\end{equation}
Moreover, we define
\begin{eqnarray}\label{compat344}
&&\Omega_{4i}=\{z_{i}\in \mathbb{R}^{n}: \|z_{i}\|\leq Q_{i}\},\; i=1,\cdots,m,\cr &&
\Omega_{4}=\underbrace{\Omega_{41}\times\cdots\times\Omega_{4m}}_{m},
\end{eqnarray}
and then the constants $N_{i}$'s in the consensus protocols \dref{con} are specified as
\begin{equation}\label{Nconstant}
\begin{split}
&\hspace{-0.2cm}N_{i}=\sup_{(x_{i1},\cdots,x_{in},d_{i},\cdots,d_{i}^{(n-1)})^{\top}\in \Omega_{4i}\times \mathcal{A}^{n}_{i}}|\varphi_{i}(x_{i1},\cdots,
x_{in},\cr&\hspace{0.6cm}d_{i},\cdots,d_{i}^{(n-1)})|+\alpha_{3i},\;i=1,\cdots,m.
\end{split}
\end{equation}

The practical output consensus and boundedness of leader-follower MASs (\ref{s1})-(\ref{s4}) under the ADRC consensus protocols (\ref{con})
are summarized up as the following theorem.

\begin{theorem}\label{tt1}
 Suppose that Assumptions \ref{a1}-\ref{as3} hold, then the leader-follower MASs (\ref{s1})-(\ref{s4}) under the ADRC control protocols (\ref{con})
can achieve the practical output consensus, and there is an $r$-independent positive constant $\Lambda$, such that $\|x(t)\|\leq \Lambda$ for all $t\geq 0$.
\end{theorem}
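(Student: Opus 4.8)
The plan is to run the standard high-gain-observer/ADRC Lyapunov argument, carefully interleaved with a saturation-based a priori bound that breaks the circular dependence between ``the state is bounded'' and ``the ESOs have converged''. First I would pass to error coordinates: the scaled observer errors $\eta_{ij}(t)=r^{1-j}(\bar{x}_{ij}(t)-\hat{\bar{x}}_{ij}(t))$, $j=1,\dots,n+1$ (with $\bar{x}_{i,n+1}$ the extended state \dref{totaldisturbance}), together with the consensus errors $\zeta_i(t)=\bar{x}_i(t)-x_0(t)$ stacked as $\zeta=(\zeta_1^\top,\dots,\zeta_m^\top)^\top$. Subtracting \dref{s6} from \dref{s3} (using $y_i=\bar{x}_{i1}$ and cancelling the common $u_i$) gives, for each $i$,
\begin{equation*}
\dot{\eta}_i=rU\eta_i+r^{-n}\dot{\bar{x}}_{i,n+1}(t)\,(0,\dots,0,1)^\top,
\end{equation*}
with $U$ the Hurwitz matrix \dref{umatrix}; writing $e_{ij}=r^{j-1}\eta_{ij}$, collecting the first $n$ of them in $\mathcal{E}$ and the $(n{+}1)$-st in $\mathcal{E}_0$, inserting \dref{con}, \dref{ens} and using the Kronecker identities together with $L_0=-L_11_m$ (since the leader has no in-neighbors), the consensus errors obey
\begin{equation*}
\dot{\zeta}=\big[(\mathbb{I}_m\otimes A)+(L_1\otimes BK)\big]\zeta-(L_1\otimes BK)\mathcal{E}+(\mathbb{I}_m\otimes B)\mathcal{E}_0+\delta,
\end{equation*}
where $\delta$ gathers the saturation defects coming from $\mathrm{sat}_M(K\vartheta_i)-K\vartheta_i$ and $\hat{\bar{x}}_{i,n+1}-\mathrm{sat}_{N_i}(\hat{\bar{x}}_{i,n+1})$ (premultiplied by $B$), which vanish whenever both saturations are in their linear branch.

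Next I would exploit that, because of the saturations in \dref{con}, $|u_i(t)|\le M+N_i+\tfrac12+\sup_t|u_0(t)|$ for all $t$ \emph{independently of} $r$; since $\bar{x}_{i,n+1}$ — hence $\dot{\bar{x}}_i$ in \dref{s3} — is dominated by a continuous function of $\|\bar{x}_i\|$ through \dref{s22}, \dref{exprssiondisutranve} and Assumption \ref{as3}, a continuation/comparison argument yields an $r$-independent $T^\ast>0$ and $R_0>0$ with $\|\bar{x}(t)\|\le R_0$, and consequently $|\dot{\bar{x}}_{i,n+1}(t)|\le B_0$ (again by Assumptions \ref{a22}, \ref{as3} and boundedness of $u_i$), on $[0,T^\ast]$. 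Feeding this $B_0$ into $V_\eta(\eta)=\sum_i\eta_i^\top P_0\eta_i$ with $U^\top P_0+P_0U=-\mathbb{I}_{n+1}$ gives $\|\eta(t)\|\le c_1e^{-c_0rt}\|\eta(0)\|+c_2B_0r^{-(n+1)}$, so there is $t_r=O(r^{-1}\ln r)\to0$ with $\|\eta(t)\|\le c_3r^{-(n+1)}$ and hence $\|\mathcal{E}(t)\|+\|\mathcal{E}_0(t)\|\le c_4/r$ for all $t\ge t_r$. Choosing $r$ large enough that $t_r<T^\ast$, the state $\bar{x}(t)$ — and, through $\psi_i$ in \dref{26equnation}, $x(t)$ — stays bounded on $[0,t_r]$ by $r$-independent constants, while $\|\zeta(t_r)-\zeta(0)\|$ is of order $t_r$; since $\zeta(0)$ lies in $\Omega_2$ by \dref{compatts}, this forces $\zeta(t_r)$ to sit well inside $\Omega_1$ once $r$ is large.

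It remains to close the argument on $[t_r,\infty)$ by a continuity/bootstrap over $\Omega_1$. As long as $\zeta(t)\in\Omega_1$, one has $\bar{x}(t)\in\Omega_3$ (by \dref{comfpat3} and $\|x_0\|\le\alpha_2$), hence $x(t)\in\Omega_4$, hence $|\bar{x}_{i,n+1}(t)|\le N_i$ and $|K\vartheta_i(t)|\le M+O(1/r)$ by \dref{Mconstant}, \dref{Nconstant}, so each saturation in \dref{con} differs from the identity by at most $O(1/r^2)$, whence $\|\delta(t)\|=O(1/r^2)$. Differentiating $V_1(\zeta)=\zeta^\top(W\otimes P)\zeta$ along the $\zeta$-dynamics, symmetrizing, and using Lemma \ref{le1}, the Riccati identity \dref{p1} and $\mu_0=\mu\lambda_{\min}(W^{-1})=\mu/\lambda_{\max}(W)$ to absorb the sign-indefinite term, one obtains $\dot{V}_1\le-\lambda_{\min}(W)\|\zeta\|^2+(c_5/r)\|\zeta\|$, equivalently $\tfrac{d}{dt}\sqrt{V_1(\zeta)}\le-\tfrac{a}{2}\sqrt{V_1(\zeta)}+\tfrac{b}{2}$ with $a$ fixed and $b=O(1/r)$. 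Since $V_1(\zeta(t_r))$ is below the $\Omega_1$-level by a fixed margin, this scalar inequality keeps $V_1(\zeta(t))$ below that level for all $t\ge t_r$, so $\zeta$ never leaves $\Omega_1$, closing the bootstrap, and it also gives $\|\zeta(t)\|\le c_6/r$ for $t$ past some $r$-dependent time. Given $\varepsilon>0$, choosing $r^\ast\ge1$ so that $c_6/r^\ast\le\varepsilon$ and all the ``$r$ large'' requirements above hold, we get for $r\ge r^\ast$ that $|y_i(t)-y_0(t)|=|\bar{x}_{i1}(t)-x_{01}(t)|=|\zeta_{i1}(t)|\le\|\zeta(t)\|\le\varepsilon$ from that time on — the practical output consensus of Definition \ref{defdefint1}; and since $\|x(t)\|$ is bounded on $[0,t_r]$ by an $r$-independent constant while, for $t\ge t_r$, $\zeta(t)\in\Omega_1$ together with $\|x_0(t)\|\le\alpha_2$ forces $x(t)\in\Omega_4$, the maximum of the two bounds furnishes the required $r$-independent $\Lambda$.

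The hard part is exactly this circularity together with the order of quantifiers. The ESO error estimate needs an $r$-independent bound $B_0$ on $\dot{\bar{x}}_{i,n+1}$, which needs $x(t)$ confined to a compact set, which is only guaranteed once the compensator $-\mathrm{sat}_{N_i}(\hat{\bar{x}}_{i,n+1})$ has (nearly) cancelled $\bar{x}_{i,n+1}$, i.e.\ once the ESO has converged. The saturations break the loop: they make $u_i$, and hence the state-growth rate, a priori bounded on the fixed short window $[0,t_r]$ on which they also clip the $O(r^n)$ peaking of $\hat{\bar{x}}_{i,n+1}$; after $t_r$ the observer errors are $O(1/r)$, and the design constants $M,N_i$ — tied precisely to $\Omega_1,\Omega_3,\Omega_4$ — ensure the saturations have switched off, so the clean Lyapunov estimate closes everything. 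Threading this ``semi-global'' dependence through — first fixing $\varepsilon$ and the compact sets $\Omega_1$–$\Omega_4$, then choosing a single $r^\ast$ large enough for all smallness conditions at once — is the main bookkeeping burden.
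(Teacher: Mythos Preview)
Your proposal is correct and follows essentially the same strategy as the paper's proof: the saturations give an $r$-independent short-time state bound, the ESO error is then driven small via the Hurwitz matrix $U$ and a quadratic Lyapunov function for $\eta$, and $V_1(\zeta)=\zeta^\top(W\otimes P)\zeta$ together with the Riccati identity \dref{p1} and Lemma~\ref{le1} traps $\zeta$ in $\Omega_1$ and delivers practical consensus. The only differences are presentational --- you scale $\eta_{ij}=r^{1-j}(\bar{x}_{ij}-\hat{\bar{x}}_{ij})$ instead of the paper's $r^{n+1-j}$, and you run a direct forward bootstrap from $t_r$ whereas the paper proves forward invariance of $\Omega_1$ by contradiction via escape times $t_0\le t_1<t_2$ from $\partial\Omega_2$ to $\partial\Omega_1$ (which packages the ``observer error stays small while $\zeta\in\Omega_1$'' step a bit more cleanly than your sketch does).
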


\begin{proof}
 For $i=1,\cdots,m$, we set
\begin{equation}\label{s5}
\begin{cases}
\eta_{ij}=r^{n+1-j}(\bar{x}_{ij}-\hat{\bar{x}}_{ij}),\;j=1,\cdots,n+1,\\
\eta_i=(\eta_{i1},\cdots,\eta_{i,n+1})^\top,
\eta=(\eta_1^{\top},\cdots,\eta_m^{\top})^\top,\\
\hat{\bar{x}}_{0j}=x_{0j},\;k=1,\cdots,n,\\
\varrho_{ij}=\bar{x}_{ij}-x_{0j}, \hat{\varrho}_{ij}=\hat{\bar{x}}_{ij}-\hat{x}_{0j},\;j=1,\cdots,n,\\
\varrho_{i}=(\varrho_{i1},\cdots,\varrho_{in})^\top,\hat{\varrho}_{i}=(\hat{\varrho}_{i1},\cdots,\hat{\varrho}_{in})^\top,\\
\varrho=(\varrho^{\top}_1,\cdots, \varrho^{\top}_m)^\top, \hat{\varrho}=(\hat{\varrho}_1^\top,\cdots,\hat{\varrho}_m^\top)^\top,\\
\hat{\bar{x}}_i=(\hat{\bar{x}}_{i1},\cdots,\hat{\bar{x}}_{in})^\top,
\hat{\bar{x}}=(\hat{\bar{x}}_1^\top,\cdots,\hat{\bar{x}}_m^\top)^\top,\\
\bar{u}_i(t)=K\mathfrak{\vartheta}_i(t)-\hat{\bar{x}}_{i,n+1}(t).
\end{cases}
\end{equation}

 There is a unique positive definite matrix $G\in \mathbb{R}^{(n+1)\times (n+1)}$ such that
\begin{equation}\label{lyapunovequation}
UG+GU^\top=-\mathbb{I}_{n+1},
\end{equation}
where $U$ is the Hurwitz matrix specified in \dref{umatrix}.

Set
\begin{eqnarray}\label{V1V2defintion}
 V_{2i}(\eta_{i})=\eta^{\top}_{i}G\eta_{i}, \; \forall \eta_{i}\in \mathbb{R}^{n+1}, i=1,\cdots,m,
\end{eqnarray}
and it can be simply obtained that
\begin{equation}
\begin{split}
V_2(\eta)\triangleq \eta^{\top}(\mathbb{I}_m\otimes G)\eta=\sum_{i=1}^mV_{2i}(\eta_{i}), \forall \eta\in \mathbb{R}^{nm+m}.
\end{split}
\end{equation}

We can show that $\varrho_{i}(t)$'s and $\eta_{i}(t)$'s satisfy
\begin{equation}\label{s6d}
\begin{cases}
\dot{\varrho}_{i1}(t)=\varrho_{i2}(t),\cr
\hspace{1.2cm}  \vdots \cr
\dot{\varrho}_{i,n-1}(t)=\varrho_{in}(t),\\
\dot{\varrho}_{in}(t)=\bar{x}_{i,n+1}(t)+u_i(t)-u_0(t),\\
\dot{\eta}_{i1}(t)=r(\eta_{i2}(t)-k_1\eta_{i1}(t)),\cr
\hspace{1.2cm}  \vdots \cr
\dot{\eta}_{in}(t)=r(\eta_{i,n+1}(t)-k_n\eta_{i1}(t)),\\
\dot{\eta}_{i,n+1}(t)=-rk_{n+1}\eta_{i1}(t)+\dot{\bar{x}}_{i,n+1}(t),\; i=1,\cdots,m.
\end{cases}
\end{equation}
Moreover, by some mathematical operations, the compact form of \dref{s6d} can be obtained as

\begin{equation}\label{s8}
\begin{cases}
\dot{\varrho}(t)=(\mathbb{I}_m\otimes A)\varrho(t)+(\mathbb{I}_m \otimes B)\Xi(t)\\
\hspace{0.65cm}=(\mathbb{I}_m\otimes A)\varrho(t){+}(L_1\otimes BK)\hat{\varrho}(t)
+(\mathbb{I}_m\otimes B)\bigtriangleup(t),\\
\dot{\eta}(t)=r(\mathbb{I}_m\otimes U)\eta(t)+(\mathbb{I}_m\otimes B_{n+1})\bigtriangledown(t),
\end{cases}
\end{equation}
where
\begin{eqnarray*}
&&\Xi(t)=(\bar{x}_{1,n+1}(t)+u_1(t)-u_0(t),\cdots,\bar{x}_{m,n+1}(t)\cr&&\hspace{1.1cm}+u_m(t)-u_0(t))^\top,
\end{eqnarray*}
\begin{equation*}
\begin{split}
&\triangle(t)=(\bar{x}_{1,n+1}(t)-\hat{\bar{x}}_{1,n+1}(t)-\bar{u}_1(t)+u_1(t)-u_0(t),\\& \cdots,
\bar{x}_{m,n+1}(t)-\hat{\bar{x}}_{m,n+1}(t)-\bar{u}_m(t)+u_m(t)-u_0(t))^\top,
\end{split}
\end{equation*}
\begin{equation*}
 B_{n+1}=(0,\cdots,0,1)^{\top}\in\mathbb{R}^{n+1},
\end{equation*}
\begin{equation*}
\bigtriangledown(t)=(\dot{\bar{x}}_{1,n+1}(t),\cdots,\dot{\bar{x}}_{m,n+1}(t))^\top.
\end{equation*}
We proceed the proof by the following three steps.

{\bf Step 1: It is proved that there exists $r_1>0$ such that for any $r\geq r_{1}$, there holds $\varrho(t)\in \Omega_{1}$ for all $t\geq 0$, which also
indicates that we can find an $r$-independent constant $\Lambda$ to make $\|x(t)\|\leq \Lambda$, $\forall t\geq0$.}

We start the proof of Step 1. By the definition of compact set $\Omega_{2}$ in \dref{compatts}, we have $\varrho(0)\in \Omega_{2}$ and $\varrho(0)$ is an interior point of $\Omega_{2}$.
 Thus, by the continuity of $\varrho(t)$ with respect to $t$, $\varrho(t)$ will stay in $\Omega_{2}\subset\Omega_{1}$ in a short period of time from $t=0$.
 Since $\bar{x}(t)=\varrho(t)+({1}_m\otimes \mathbb{I}_n)x_0(t)$ and $x_0(t)$ is bounded by Assumption \ref{as3},
  $\bar{x}(t)$ will lie in $\Omega_{3}$ defined in \dref{comfpat3} in a short period of time from $t=0$.
  It then follows from Assumption \ref{as3}, \dref{26equnation}, and \dref{qqiii} that
  $x(t)\in \Omega_{4}$ for $\Omega_{4}$ defined in \dref{compat344} in a short period of time from $t=0$.
  Furthermore, by the equivalent expression of total disturbances $\bar{x}_{i,n+1}(t)$'s in \dref{exprssiondisutranve} and Assumptions \ref{a22}-\ref{as3},
 we have $\bar{x}_{i,n+1}(t)\leq N_{i}$  within a short time from $t=0$ for $N_{i}$'s given in \dref{Nconstant} independent of $r$.
By the $\varrho$-subsystem of \dref{s6d} and the boundedness of $u_{i}(t)-u_{0}(t)$ guaranteed by the saturation functions whose bounds are also independent of $r$,
it can be concluded that there is an $r$-independent time $t_{0}>0$, such that $\varrho(t)\in \Omega_{1},\; \forall t\in [0, t_{0}]$.

The conclusion of Step 1 can be obtained by the following reductio ad absurdum. Let us first assume that the conclusion of Step 1 is false, and then on the basis of the continuity of $\varrho(t)$ in $t$, there is $r$-dependent constants $t_{1},t_{2}$ satisfying $t_{2}>t_{1}\geq t_{0}$ such that
\begin{equation}\label{comptactsdf}
\begin{split}
&\varrho(t_{1})\in \partial \Omega_{2},\; \varrho(t_{2})\in\partial \Omega_{1},\\
&\{\varrho(t):t\in(t_{1},t_{2}]\}\subset \Omega_{1}-\Omega_{2}^0,\\
&\{\varrho(t):t\in[0,t_{2}]\}\subset\Omega_{1},
\end{split}
\end{equation}
where $\partial \Omega_{j}\;(j=1,2)$ and $\Omega_{2}^0$  represent the boundary of $\Omega_{j}$ and the interior of $\Omega_{2}$, respectively.

For $i=1,\cdots,m$, computing the derivative of the total disturbance $\bar{x}_{i,n+1}(t)$ with regard to $t$, it is obtained that
\begin{equation}\label{s41}
\begin{split}
&\dot{\bar{x}}_{i,n+1}(t)=h^{(1)}_{in}(x_i(t),d_{i}(t))\cr&+\sum_{l=1}^{n-1}h^{(l+1)}_{i,n-l}(x_{i1}(t),\cdots,x_{i,n-l}(t),d_{i}(t))+w^{(n+1)}_{i}(t).
\end{split}
\end{equation}
Similar to \dref{computations}, it can be obtained that there exist continuous functions $f_{i}$ and $g_{i}$ such that
\begin{equation}
\begin{split}
& h^{(1)}_{in}(x_i(t),d_{i}(t))=f_{i}(x_{i}(t),d_{i}(t),\dot{d}_{i}(t),u_{i}(t)), \cr&
h^{(l+1)}_{i,n-l}(x_{i1}(t),\cdots,x_{i,n-l}(t),d_{i}(t)) \cr&
=g_{i}(x_{i}(t),d_{i}(t),\dot{d}_{i}(t),\cdots,d^{(l+1)}_{i}(t),u_{i}(t))
\end{split}
\end{equation}

Similar to aforementioned deductions, since $\varrho(t)$ is bounded in $t\in[0,t_2]$, $\bar{x}(t)$ and
then $x(t)$ is bounded in $t\in[0,t_2]$.
In addition, $u_{i}(t)$'s are bounded ensured by the saturation functions.
These together with Assumption \ref{as3}, yield that
\begin{equation}\label{s44}
|\dot{\bar{x}}_{i,n+1}(t)|\leq D_{1i},\; \forall t\in [0,t_{2}],
\end{equation}
for some positive constants $D_{1i}$ independent of $r$.
According to the definition of $V_{2i}$ in \dref{V1V2defintion},
we have
\begin{eqnarray}
&&\lambda_{\min}(G)\|\eta_{i}\|^{2}\leq V_{2i}(\eta_i)\leq \lambda_{\max}(G)\|\eta_{i}\|^{2}, \cr&&
\frac{\partial V_{2i}(\eta_i)}{\partial \eta_{i,n+1}}\leq 2\lambda_{\max}(G)\|\eta_i\|,\;\forall \eta_{i}\in \mathbb{R}^{n+1}.
\end{eqnarray}
These together with  \dref{lyapunovequation}, for all $t\in [0,t_{2}]$, follow that
\begin{eqnarray}\label{s45}
&&\hspace{-1.5cm}\frac{dV_{2i}(\eta_i(t))}{dt}=r\sum_{j=1}^n\frac{\partial V_{2i}(\eta_i(t))}{\partial \eta_{ij}}\left[\eta_{i,j+1}(t)-k_j\eta_{i1}(t)\right]\cr &&\hspace{-1.5cm}
-r\frac{\partial V_{2i}(\eta_i(t))}{\partial \eta_{i,n+1}}k_{n+1}\eta_{i1}(t)+\frac{\partial V_{2i}(\eta_i(t))}{\partial \eta_{i,n+1}}\dot{\bar{x}}_{i,n+1}(t)\cr && \hspace{-1.5cm}\leq -\frac{r}{\lambda_{\max}(G)}V_{2i}(\eta_i(t))+\frac{2\lambda_{\max}(G)D_{1i}}{\sqrt{\lambda_{\min}(G)}}\sqrt{V_{2i}(\eta_i(t))}.
\end{eqnarray}
According to the common-used inequality $\disp(\sum^{m}_{i=1}a_{i})^{p}\leq m^{p-1}\sum^{m}_{i=1}a^{p}_{i}$ for any $a_{i}\geq 0$ and $p>1$, we have
\begin{equation*}
\sum_{i=1}^m\sqrt{V_{2i}(\eta_i)}\leq \sqrt{m}\sqrt{V_2(\eta)}.
\end{equation*}

Therefore, for $t\in[t_{1},t_2]$, we have
\begin{equation*}
\begin{split}
&\frac{dV_2(\eta(t))}{dt}=\sum_{i=1}^m\frac{dV_{2i}(\eta_i(t))}{dt}\\&
\leq -\frac{r}{\lambda_{\max}(G)}V_2(\eta(t))+\frac{2\sqrt{m}\lambda_{\max}(G)\disp\max_{1\leq i\leq m}D_{1i}}{\sqrt{\lambda_{\min}(G)}}\sqrt{V_2(\eta(t))},
\end{split}
\end{equation*}
which means
\begin{eqnarray*}
&&\sqrt{V_2(\eta(t))}\cr&&\leq e^{-\frac{r}{2\lambda_{\max}(G)}t}\sqrt{V_2(\eta(0))}+\frac{2\sqrt{m}\lambda^{2}_{\max}(G)\disp\max_{1\leq i\leq m}D_{1i}}{\sqrt{\lambda_{\min}(G)}r}.
\end{eqnarray*}
It can be obtained that
\begin{eqnarray}\label{conductionsf}
&&\hspace{-0.7cm}e^{-\frac{rt}{2\lambda_{\max}(G)}}\sqrt{V_2(\eta(0))}\cr&&\hspace{-0.7cm}\leq e^{-\frac{rt_{0}}{2\lambda_{\max}(G)}}\sqrt{V_2(\eta(0))}\leq
e^{-\frac{rt_{0}}{2\lambda_{\max}(G)}}\sqrt{\lambda_{\max}(G)}\|\eta(0)\|\cr&&\hspace{-0.7cm}
\leq e^{-\frac{rt_{0}}{2\lambda_{\max}(G)}}\sqrt{\lambda_{\max}(G)}[\sum^{m}_{i=1}\sum^{n+1}_{j=1}r^{2n+2-2j}(\bar{x}_{ij}(0)-\hat{\bar{x}}_{ij}(0))^{2}]^{\frac{1}{2}}
\cr&&\hspace{-0.7cm} \rightarrow 0 \;\; \mbox{in}\;\; t\in[t_1,t_2] \;\;\mbox{as} \;\; r\rightarrow+\infty,
\end{eqnarray}
where $t_{0}>0$ is a  constant independent of $r$.
Thus,
\begin{equation*}
\|\eta(t)\|\rightarrow 0 \;\; \mbox{in}\;\; \mbox{uniformly in } t\in[t_1,t_2] \;\;\mbox{as} \;\; r\rightarrow+\infty.
\end{equation*}

This yields, for
\begin{equation}\label{defingef}
 \varsigma\triangleq\min\{\frac{1}{2},\frac{1}{2\|L_1\otimes K\|},\frac{2}{\|L_1\otimes K\|^{2}},\frac{\beta_{1}\disp\min_{\varrho \in \Omega_{1}}V_{1}(\varrho)}{\beta_{2}}\},
\end{equation}
with $\beta_{1},\beta_{2}$ be specified in \dref{bspecifeied},
 there exists
 $r_1\geq 1$ such that for any $r\geq r_{1}$, we have $\|\eta(t)\|\leq \varsigma,\;\forall t\in[t_1,t_2]$.
By a direct compuation, we have
\begin{equation}\label{s46}
\begin{split}
&\|\varrho(t)-\hat{\varrho}(t)\|=\|\bar{x}(t)-\hat{\bar{x}}(t)\|\\
&=\|(\frac{\eta_{11}(t)}{r^n},\cdots,\frac{\eta_{1n}(t)}{r},\cdots,\frac{\eta_{m1}(t)}{r^n},\cdots,\frac{\eta_{mn}(t)}{r})\|\\
&\leq\|\eta(t)\|,\; \forall t\geq 0.
\end{split}
\end{equation}
Since $\varrho(t)\in \Omega_{1}$ for $t\in [0,t_{2}]$, by the definition of $M$ in \dref{Mconstant}, we further have
\begin{equation}\label{s47}
\begin{split}
&\|L_1\otimes K \hat{\varrho}(t)\|\leq \|L_1\otimes K\|\left(\|\varrho(t)\|+\|\hat{\varrho}(t)-\varrho(t)\|\right)\\
&\hspace{2.1cm}\leq M+\frac{1}{2},\; \forall t\in[t_{1},t_{2}].
\end{split}
\end{equation}
By the definition of (\ref{ens}), it follows that $|K\vartheta_i(t)|\leq \|L_1\otimes K \hat{\varrho}(t)\|\leq M+\frac{1}{2},\forall t\in[t_{1},t_{2}]$.
Therefore, for $i=1,\cdots,m$, if $|K\vartheta_i(t)|\leq M$, it is directly obtained that
\begin{equation*}
K\vartheta_i(t)-\mathrm{sat}_{M}(K\vartheta_i(t))=0,\; \forall t\geq 0.
\end{equation*}
If $K\vartheta_i(t)>M$, then
\begin{equation}
\begin{split}
&\hspace{-0.6cm}|K\vartheta_{i}(t)-M|\leq K\vartheta_{i}(t)-\|L_1\otimes K \varrho(t)\|\cr&\hspace{-0.6cm}
\leq \|L_1\otimes K \|\cdot\|\hat{\varrho}(t)-\varrho(t)\|\leq \|L_1\otimes K \|\varsigma,\; \forall t\in[t_{1},t_{2}],
\end{split}
\end{equation}
and
\begin{equation*}
\begin{split}
&~~|K\vartheta_i(t)-\mathrm{sat}_{M}(K\vartheta_i(t))|\\
&=|K\vartheta_i(t)+\frac{1}{2}(K\vartheta_i(t))^2-(M+1)K\vartheta_i(t)+\frac{1}{2}M^2|\\
&=\frac{(K\vartheta_i(t)-M)^2}{2}\leq \varsigma,\; \forall t\in[t_{1},t_{2}].
\end{split}
\end{equation*}
The similar conclusion can be directly drew for $K\vartheta_i(t)<-M$  by the fact that $\mathrm{sat}_M(\cdot)$ is an odd function.
Hence, $|K\vartheta_i(t)-\mathrm{sat}_{M}(K\vartheta_i(t))|\leq\varsigma$ for all $t\in[t_1,t_2]$.
Since $\varrho(t)\in \Omega_{1}$ for $t\in [0,t_{2}]$, exactly following the reasoning of the fore of Step 1,
it can be obtained that  $|\bar{x}_{i,n+1}(t)|\leq N_{i}$ for $t\in [0,t_{2}]$ and $i=1,\cdots,m$.
Since $|\hat{\bar{x}}_{i,n+1}(t)|\leq |\bar{x}_{i,n+1}(t)|+\|\eta(t)\|\leq N_{i}+\frac{1}{2}$ for all $t\in[t_1,t_2]$,
it can be similarly concluded that $|\hat{\bar{x}}_{i,n+1}(t)-\mathrm{sat}_{N_{i}}(\hat{\bar{x}}_{i,n+1}(t))|\leq\varsigma$ and
then $|\bar{x}_{i,n+1}(t)-\mathrm{sat}_{N_{i}}(\hat{\bar{x}}_{i,n+1}(t))|\leq 2\varsigma$ for all $t\in[t_1,t_2]$.

So, we have
\begin{equation}\label{errorer}
\|\bigtriangleup(t)\|\leq 3\sqrt{m}\varsigma,\;\forall t\in[t_1,t_2] .
\end{equation}
Noting that the positive definite matrix $P\in \mathbb{R}^{n\times n}$ solves the Riccati equation \dref{p1} and the small constant $\varsigma$
is defined in \dref{defingef}, and taking the derivative of $V_1(\varrho(t))$ with regard to $t$ along $\varrho$-subsystem of (\ref{s8}) to obtain
\begin{eqnarray}\label{s10}
&&\disp\frac{dV_1(\varrho(t))}{dt}=\varrho^{\top}(t)(W\otimes (PA+A^{\top}P)\cr&&\disp
-(WL_1+L^{\top}_1W)\otimes PBB^{\top}P)\varrho(t)\cr&&\disp
-2\varrho^{\top}(t)(WL_1\otimes PBB^{\top}P)(\hat{\varrho}(t)-\varrho(t))\cr&&\disp
+2\varrho^{\top}(t)(W\otimes PB)\bigtriangleup(t)\cr&&\disp
\leq \varrho^{\top}(t)(W\otimes (PA+A^{\top}P)-(\mu\mathbb{I}_m\otimes PBB^{\top}P))\varrho(t)\cr&&\disp
-2\varrho^{\top}(t)(WL_1\otimes PBB^{\top}P)(\hat{\varrho}(t)-\varrho(t))\cr&&\disp
+2\varrho^{\top}(t)(W\otimes PB)\bigtriangleup(t)\cr&&\disp
\leq \underline{\varrho}^{\top}(t)(\mathbb{I}_m\otimes(PA+A^{\top}P-\mu_0PBB^{\top}P))\underline{\varrho}(t)\cr&&\disp
-2\varrho^{\top}(t)(WL_1\otimes PBB^{\top}P)(\hat{\varrho}(t)-\varrho(t))\cr&&\disp
+2\varrho^{\top}(t)(W\otimes PB)\bigtriangleup(t)\cr&&\disp
= -\underline{\varrho}^{\top}(t)\underline{\varrho}(t)-2\varrho^{\top}(t)(WL_1\otimes PBB^{\top}P)(\hat{\varrho}(t)-\varrho(t))\cr&&\disp
+2\varrho^{\top}(t)(W\otimes PB)\bigtriangleup(t)\cr&&\disp
=-\varrho^{\top}(t)(W\otimes \mathbb{I}_n)\varrho(t)\cr&&\disp
-2\varrho^{\top}(t)(WL_1\otimes PBB^{\top}P)(\hat{\varrho}(t)-\varrho(t))\cr&&\disp
+2\varrho^{\top}(t)(W\otimes PB)\bigtriangleup(t)\cr&&\disp
\leq -\beta_1V_1(\varrho(t))+\beta_2\varsigma<0,~~t\in[t_1,t_2],
\end{eqnarray}
where
\begin{eqnarray}\label{bspecifeied}
&&\hspace{-1.4cm}  \mu=\lambda_{\min}(WL_1+L^{\top}_1W),\;\mu_{0}=\mu\lambda_{\min}(W^{-1}),\cr
&&\hspace{-1.4cm} \underline{\varrho}(t)=(W^{\frac{1}{2}}\otimes \mathbb{I}_n)\varrho(t),\;\;
\beta_1=\frac{\lambda_{\min}(W)}{\lambda_{\max}(W\otimes P)},\cr&&\hspace{-1.4cm}
\beta_{2}=2M\|WL_1\otimes PBB^{\top}P\|+6M\|W\otimes PB\|\sqrt{m}.
\end{eqnarray}
It follows from \dref{s10} that $V_1(\varrho(t))$ is monotonic decreasing in $t\in[t_1,t_2]$. However, by (\ref{compatts}) and (\ref{comptactsdf}),
we have $V_1(\varrho(t_2))=V_1(\varrho(t_1))+1$, which leads to the contradiction. Consequently, $\{\varrho(t):t\in[0,\infty)\}\subset \Omega_1$ for any $r\geq r_1$,
and then  there is an $r$-independent constant $\Lambda$, with the result that $\|x(t)\|\leq \Lambda, \forall t\geq0$.

{\bf Step 2: It is proved that for any $T>0$, $\|\eta(t)\|\rightarrow 0$ uniformly in $t\in[T,+\infty)$ as $r\rightarrow+\infty$, which also means $|\bar{x}_{ij}(t)-\hat{\bar{x}}_{ij}(t)|\rightarrow0$ uniformly in $t\in[T,+\infty)$ as $r\rightarrow+\infty$ for  $i=1,\cdots,m, j=1,\cdots,n+1$.}

Similar to the proof of Step 1 and $\{\varrho(t):t\in[0,+\infty)\}\subset \Omega_1$ for any $r\geq r_1$, it can be concluded that there are $r$-independent positive constants $D_{2i}$ satisfying
\begin{equation*}
|\dot{\bar{x}}_{i,n+1}(t)|\leq D_{2i},\;\forall t\geq 0,
\end{equation*}
and then
\begin{eqnarray*}
&&\sqrt{V_2(\eta(t))}\cr&&\leq e^{-\frac{r}{2\lambda_{\max}(G)}t}\sqrt{V_2(\eta(0))}+\frac{2\sqrt{m}\lambda^{2}_{\max}(G)\disp\max_{1\leq i\leq m}D_{2i}}{\sqrt{\lambda_{\min}(G)}r}
\end{eqnarray*}
for all $t\geq 0$. Similar to \dref{conductionsf}, we have
\begin{eqnarray}\label{conductsf}
e^{-\frac{r}{2\lambda_{\max}(G)}t}\sqrt{V_2(\eta(0))}\rightarrow 0
\end{eqnarray}
uniformly in $t\in[T,+\infty]$ as $r\rightarrow +\infty$.
These lead to $\|\eta(t)\|\rightarrow 0$ uniformly in $t\in[T,+\infty)$ as $r\rightarrow+\infty$, which ends the proof of Step 2.

{\bf Step 3: It is proved that for any $\varepsilon>0$, there is $r^*>0$ such that
for any $r\geq r^*$ and all $t\geq t_{r}$ with $t_{r}$ be an
$r$-dependent positive constant, there holds $\|\varrho(t)\|\leq \varepsilon$ and then
 $|y_i(t)-y_{0}(t)|\leq \varepsilon$ for all $t\geq t_{r}$ and $i=1,\cdots,m.$}

By the conclusion of Step 2, similar to the proof of \dref{errorer}, for any $\varepsilon>0$,
there is $r^{*}\geq r_{1}$ with the result that for any $r\geq r^{*}$ and $T>0$, there holds
\begin{eqnarray}
&&\hspace{-0.7cm}2M\|WL_1\otimes PBB^{\top}P\|\cdot\|\eta(t)\|
+2M\|W\otimes PB\|\cdot\|\Delta(t)\|\cr&&\hspace{-0.7cm}<\beta_{1}\lambda_{\min}(W\otimes P)\varepsilon^{2},\; \forall t\in [T,\infty).
\end{eqnarray}
Therefore, analogue to the proof of (\ref{s10}), it comes to the conclusion that when $\|\varrho(t)\|>\varepsilon$, we have
\begin{eqnarray}\label{s11}
&&\hspace{-1.2cm}\frac{dV_1(\varrho(t))}{dt}\cr&&\hspace{-1.2cm}\leq -\beta_1V_1(\varrho(t))+2M\|WL_1\otimes PBB^{\top}P\|\cdot\|\eta(t)\|\cr&&\hspace{-0.9cm}+2M\|W\otimes PB\|\cdot\|\Delta(t)\|
\cr&&\hspace{-1.2cm}<0.
\end{eqnarray}
This yields that for any $r\geq r^{*}$,
there is an $r$-dependent constant $t_r$ with the result that
\begin{equation*}
\|\varrho(t)\|\leq \varepsilon,\; \forall t\in[t_r,+\infty),
\end{equation*}
which further indicates  that
\begin{equation*}
|y_i(t)-y_{0}(t)|\leq \|\varrho(t)\|\leq\varepsilon,\; \forall t\in[t_r,+\infty).
\end{equation*}
This ends the proof of Theorem \ref{tt1}.
\end{proof}

\begin{remark}
Compared with available literature like \cite{wu2021,misMAS1,misMAS2}, the novelty and the essential difficulty in the theoretical
analysis are brought about by the mismatched uncertainties, but not the mismatched disturbances.
This is the main reason why we do not lay stress on mismatched disturbances in the title of this paper,
which can also be included as part of the mismatched uncertainties. In addition, as a result of
the existence of mismatched disturbances and mismatched uncertainties that are nonvanishing at the steady state,
the consensus can only be addressed with regard to outputs instead of the consensus of other states, in which
other states can only  be guaranteed to be bounded in the ADRC's closed-loop.
\end{remark}
\section{Numerical simulations}\label{se4}

Some numerical simulations are conducted  to bear out
the validity of the ADRC consensus protocols and theoretical result
in this section.
Consider second-order MASs subject to mismatched disturbances,
mismatched uncertainties, and measurement noises as follows:
\begin{equation}\label{ns1}
\begin{cases}
&\dot{x}_{i1}(t)=x_{i2}(t)+h_{i1}(x_{i1}(t),d_{i}(t)),\\
&\dot{x}_{i2}(t)=h_{i2}(x_{i1}(t),x_{i2}(t),d_{i}(t))+u_i(t),\\
&y_i(t)=x_{i1}(t)+w_{i}(t), \; i\in \{1,\cdots,5\},
\end{cases}
\end{equation}
which is a special of  MASs \dref{s1} with $n=2,m=5$, and the leader is described as
a special case of \dref{s4} with $n=2$.
The network topology is shown in Figure 1.
\begin{figure}[htb]
\center{\includegraphics[width=5cm]  {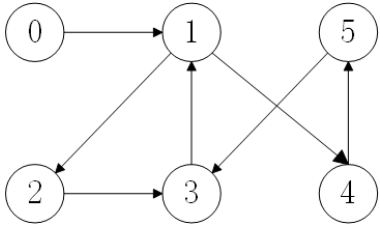}}
 \caption{\label{1} Topology structure}
 \end{figure}

Five ESOs are designed as follows:
\begin{equation}\label{ns2}
\begin{cases}
\dot{\hat{\bar{x}}}_{i1}(t)=\hat{\bar{x}}_{i2}(t)+3r(y_i(t)-\hat{\bar{x}}_{i1}(t)),\\
\dot{\hat{\bar{x}}}_{i2}(t)=\hat{\bar{x}}_{i3}(t)+3r^2(y_i(t)-\hat{\bar{x}}_{i1}(t)),\\
\dot{\hat{\bar{x}}}_{i3}(t)=r^3(y_i(t)-\hat{\bar{x}}_{i1}(t)),\;i=1,\cdots,5,
\end{cases}
\end{equation}
where $k_{i}$'s are chosen as $k_1=k_2=3,k_3=1$ so that the matrix $U$ specified in \dref{umatrix} is Hurwitz.
According to the topology structure and the Riccati equation (\ref{p1}), it can be easily obtained
\begin{equation*}
W^\top=\begin{bmatrix}
5&7&6&2&1
\end{bmatrix}
\end{equation*}
\begin{equation*}
P=\begin{bmatrix}
2.3216& 2.1949\\
2.1949&5.0956
\end{bmatrix}~~ \mathrm{and}~~K=\begin{bmatrix}
 -2.1949&-5.0956
\end{bmatrix}.
\end{equation*}
The ADRC consensus protocols in \dref{con} are designed as
\begin{equation}
u_i(t)=\mathrm{sat}_{5}(K\vartheta_i(t))-\mathrm{sat}_{5}(\hat{\bar{x}}_{i3}(t)),\;i=1,\cdots,5,
\end{equation}
where $\vartheta_i(t)$ and $\mathrm{sat}_{5}(\cdot)$ are defined in  \dref{ens} and \dref{saturaionfunct}, respectively.

In all the numerical simulations, the initial values of system (\ref{ns1}) are selected as $x_1(0)=(0.1,-0.4)^\top$, $x_2(0)=(0.2,0.3)^\top$, $x_3(0)=(0.5,-0.5)^\top$,
$x_4(0)=(0.5,-0.5)^\top$, $x_5(0)=(-0.8,0.7)^\top$, and all initial values of ESOs (\ref{ns2}) are zero. The initial values of the dynamics of the leader is specified as $x_0(0)=[0.3,0.2]^\top$, and its
control input is $u_0(t)=-x_{01}(t)-2x_{02}(t)+\cos(x_{01}^2(t)+x_{02}^2(t)),\;\forall t\geq 0$. It is easy to prove that the states $x_{01}(t),x_{02}(t)$ are bounded.

In Figures \ref{trackingeffect2}-\ref{trackingeffect3},
the mismatched disturbances, mismatched uncertainties, and measurement noises are chosen as follows:
\begin{equation}\label{para1}
\begin{split}
&h_{i1}(x_{i1},d_{i})=0.15e^{x_{i1}}+0.2\cos^{3}(x_{i1})+d^2_{i},\\
&d_{i}(t)=\sqrt{0.3}\sin(2t),\; i=1,2;\\
& h_{i1}(x_{i1},d_{i})=0.2x^{3}_{i1}+0.2x^{2}_{i1}+d_{i},\\
&d_i(t)=0.2\cos(2t),\; i=3,4,5; \\
&h_{i2}(x_{i1},x_{i2},d_{i})=0.3x_{i1}+0.2e^{0.01x_{i2}}+d_{i},\\
&d_i(t)=0.2\sin(t),\;i=1,2;\\
&h_{i2}(x_{i1},x_{i2},d_{i})=0.3x_{i1}+0.2e^{-0.1x_{i2}}+d^{3}_{i},\\
&d_i(t)=\sqrt[3]{0.2}\sin(t),\;i=3,4,5;\\
&w_{i}(t)=\cos(t), i=1,2,\;w_{i}(t)=0.1te^{-t}, i=3,4,5.
\end{split}
\end{equation}
In Figure \ref{trackingeffect2}, the tuning gain $r$ is chosen as $r=10$.
The output consensus effect and the estimation effect of actual total disturbances $\bar{x}_{i3}(t)$
are satisfactory by observing the error curves of $y_{i}(t)-y_{0}(t)$ and $\hat{\bar{x}}_{i3}(t)-\bar{x}_{i3}(t)$,
which can be seen from Figure \ref{trackingeffect2}(a) and Figure \ref{trackingeffect2}(c), respectively.
In Figure \ref{trackingeffect3}, the tuning gain $r$ is increased to be $r=50$.
It can be observed from  Figure \ref{trackingeffect3}(a) and Figure \ref{trackingeffect3}(c)
that both the output consensus effect and the estimation effect of actual total disturbances $\bar{x}_{i3}(t)$
are more satisfactory than those in Figure \ref{trackingeffect2}, which is accord with
the fact indicated by the theoretical result that
the upper bound of the tracking/estimation errors are inverse proportional to the tuning parameter $r$.
The boundedness of the second states $x_{i2}(t)$ of all followers can be seen from  Figure \ref{trackingeffect2}(b) and Figure \ref{trackingeffect3}(b),
which is also consistent with the theoretical result.

In Figure \ref{trackingeffect4}, the tuning gain $r$ is still to be $r=50$, but the
 mismatched disturbances, mismatched uncertainties, and measurement noises are varied as follows:
 \begin{eqnarray}\label{para2}
\begin{split}
&h_{i1}(x_{i1},d_{i})=0.2e^{x_{i1}}+0.3\cos^{3}(x_{i1})+d^{2}_{i},\\
&d_i(t)=\sin(2t),\; i=1,2; \\ &h_{i1}(x_{i1},d_{i})=0.4x^{3}_{i1}+0.4x^{2}_{i1}+d_{i},\\ &d_i(t)=0.3\cos(2t), \;i=3,4,5;\\
&h_{i2}(x_{i1},x_{i2},d_{i})=x_{i1}+0.5e^{0.01x_{i2}}+d_{i},\\
&d_i(t)=\sin(t),\;i=1,2;\\
&h_{i2}(x_{i1},x_{i2},d_{i})=0.4x_{i1}+0.3e^{-0.1x_{i2}}+d^{3}_{i},\\
&d_i(t)=\sqrt[3]{0.5}\sin(t),\;i=3,4,5;\\
&w_{i}(t)=\cos(2t), i=1,2,\;w_{i}(t)=0.2te^{-t}, i=3,4,5.
\end{split}
\end{eqnarray}
Compared with \dref{para1}, although most coefficients in the system functions and disturbances are increased,
it can be observed from Figure \ref{trackingeffect4} that the good outcomes of output consensus, state
boundedness, and estimation of actual total disturbances are still preserved, which demonstrates
the robustness of the proposed ADRC consensus protocols up to a point.
Finally, it can be seen from Figures \ref{trackingeffect2}-\ref{trackingeffect4} that
all the effects of output consensus and estimation of actual total disturbances of
the first two followers are not as good as the others. This is because the system functions in
dynamics of the first two followers are with exponential growth, while the others are only with polynomial growth,
which is consistent with the theoretical result that the tracking/estimation effects are dependent on
the intensity of the disturbances and uncertainties.
\begin{figure}[!htb]\centering
	\subfigure[The trajectories of tracking errors of outputs]
	{\includegraphics[width=4.3cm,height=4cm]{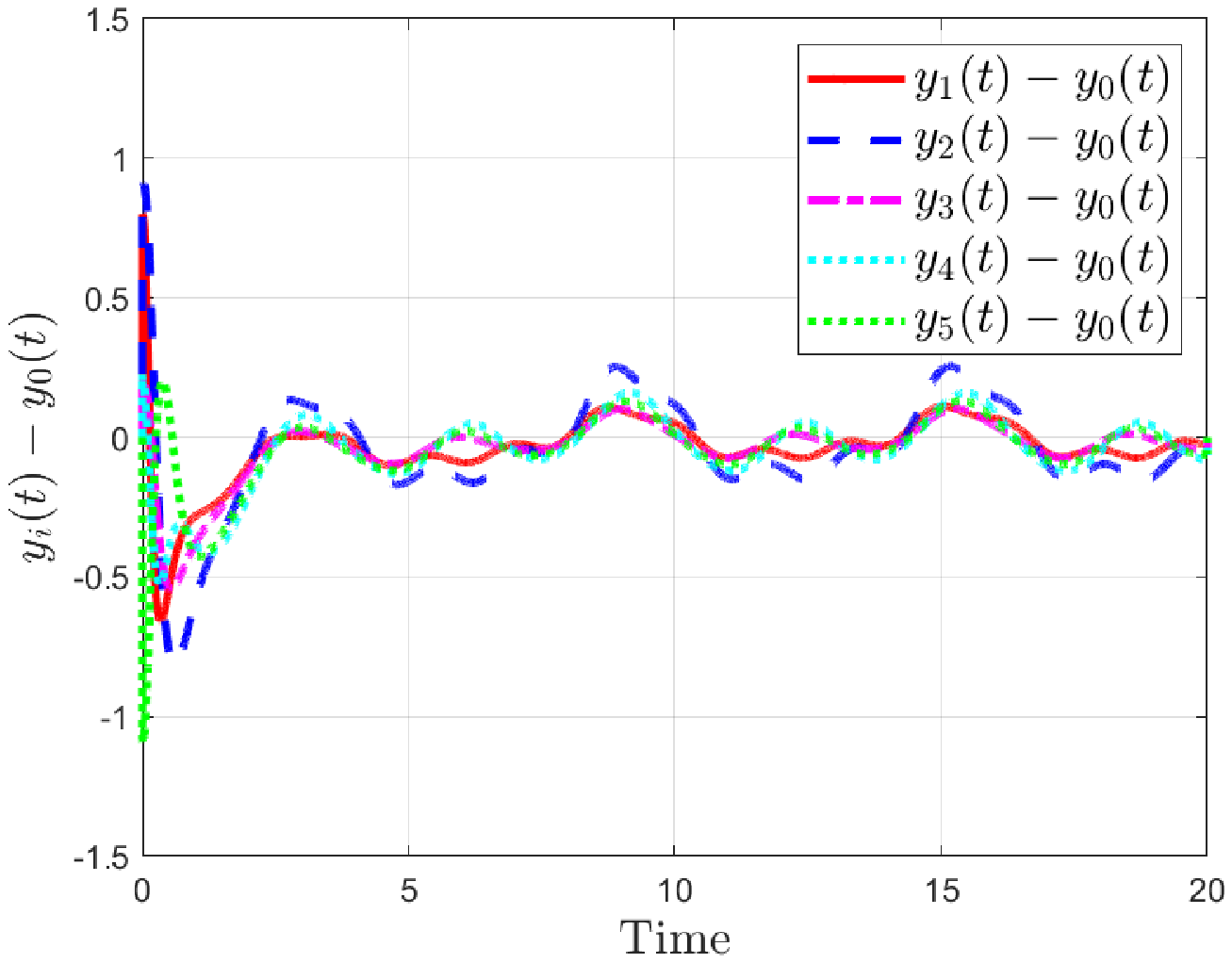}}
	\subfigure[The trajectories of the second state of followers]
	{\includegraphics[width=4.3cm,height=4cm]{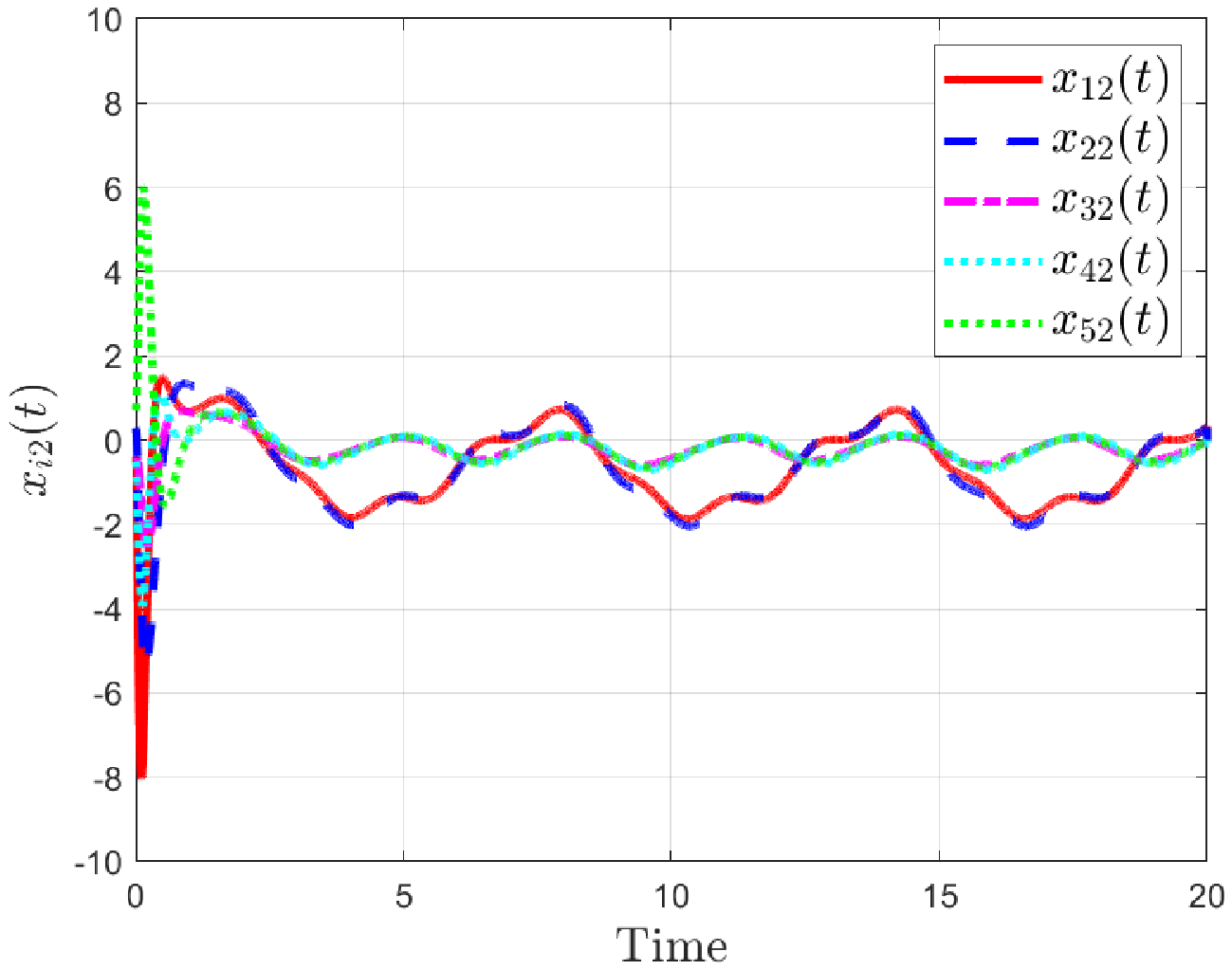}}
	\subfigure[The trajectories of estimation errors of actual total disturbances]
	{\includegraphics[width=4.3cm,height=4cm]{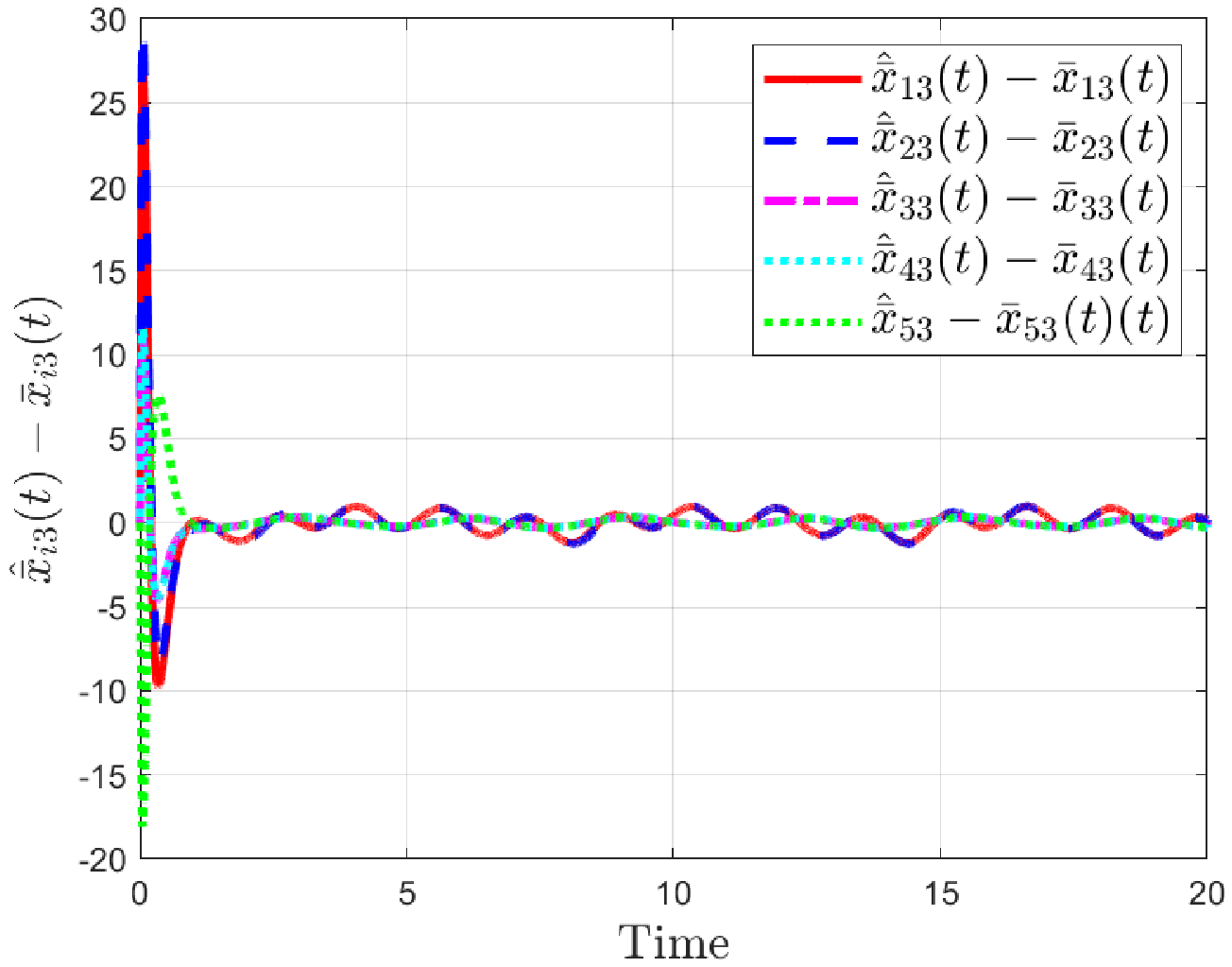}}
	\caption{The effects of output consensus, state bounedness, and estimation of actual total disturbances.}\label{trackingeffect2}
\end{figure}

\begin{figure}[!htb]\centering
	\subfigure[The trajectories of tracking errors of outputs]
	{\includegraphics[width=4.3cm,height=4cm]{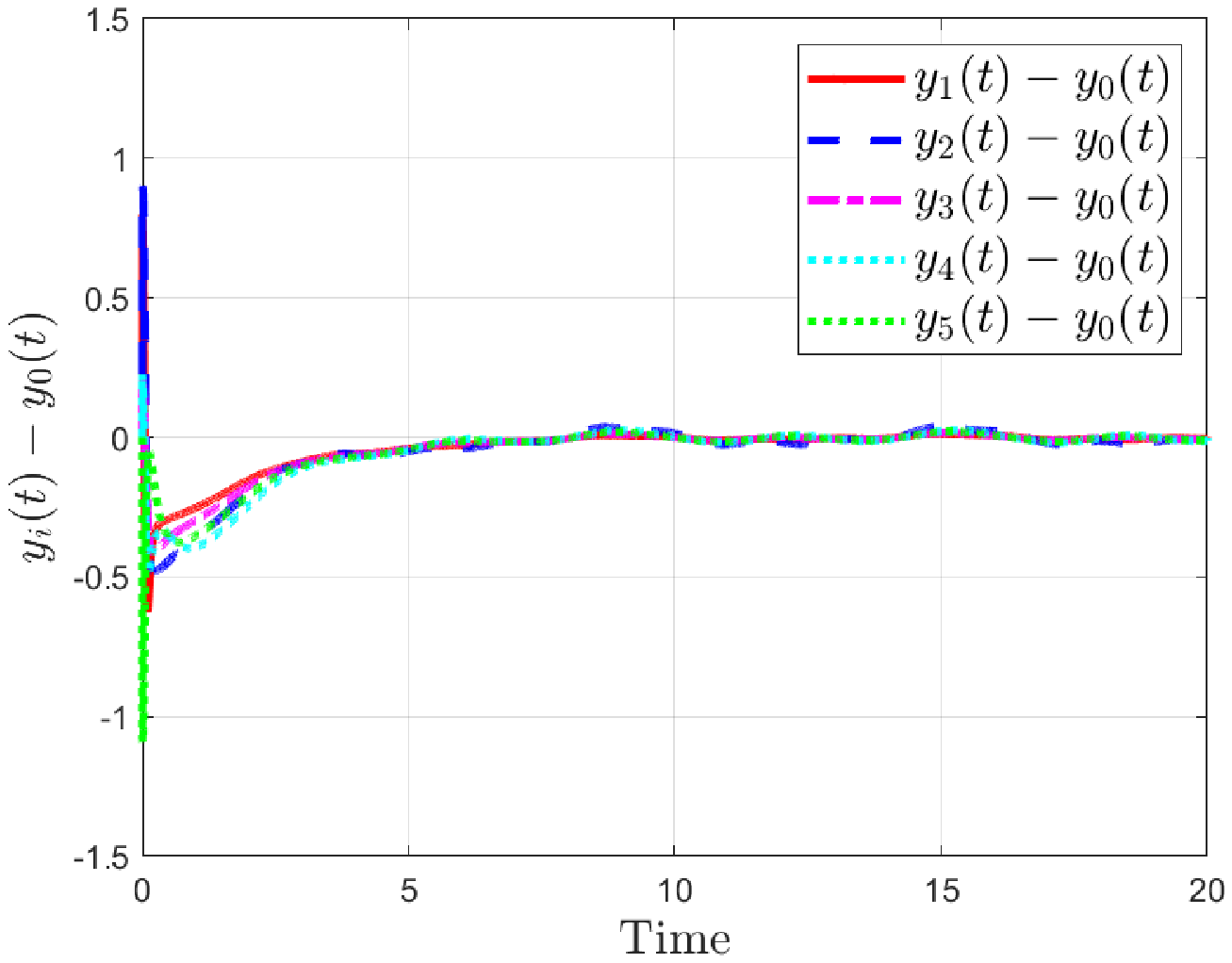}}
	\subfigure[The trajectories of the second state of followers]
	{\includegraphics[width=4.3cm,height=4cm]{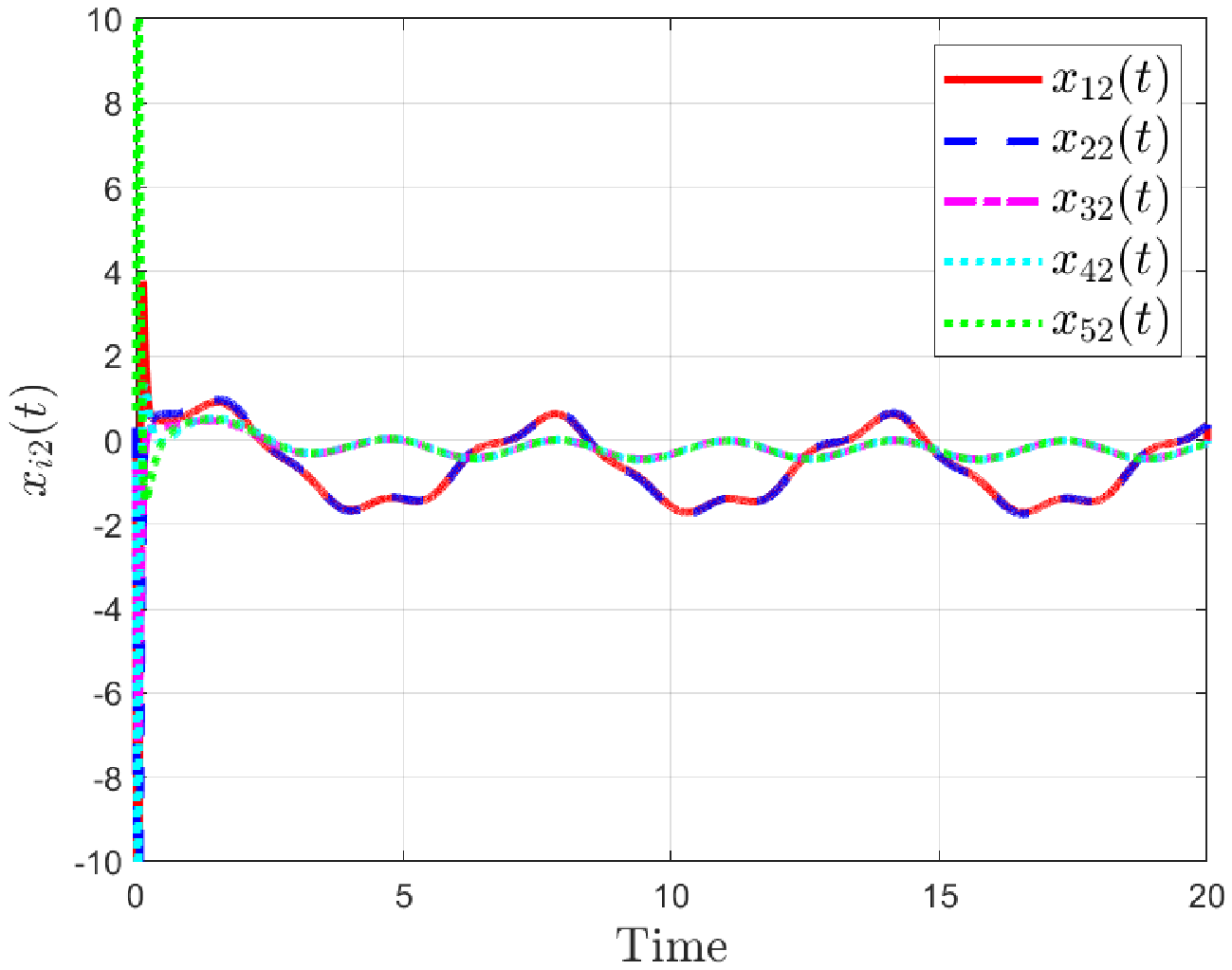}}
	\subfigure[The trajectories of estimation errors of actual total disturbances]
	{\includegraphics[width=4.3cm,height=4cm]{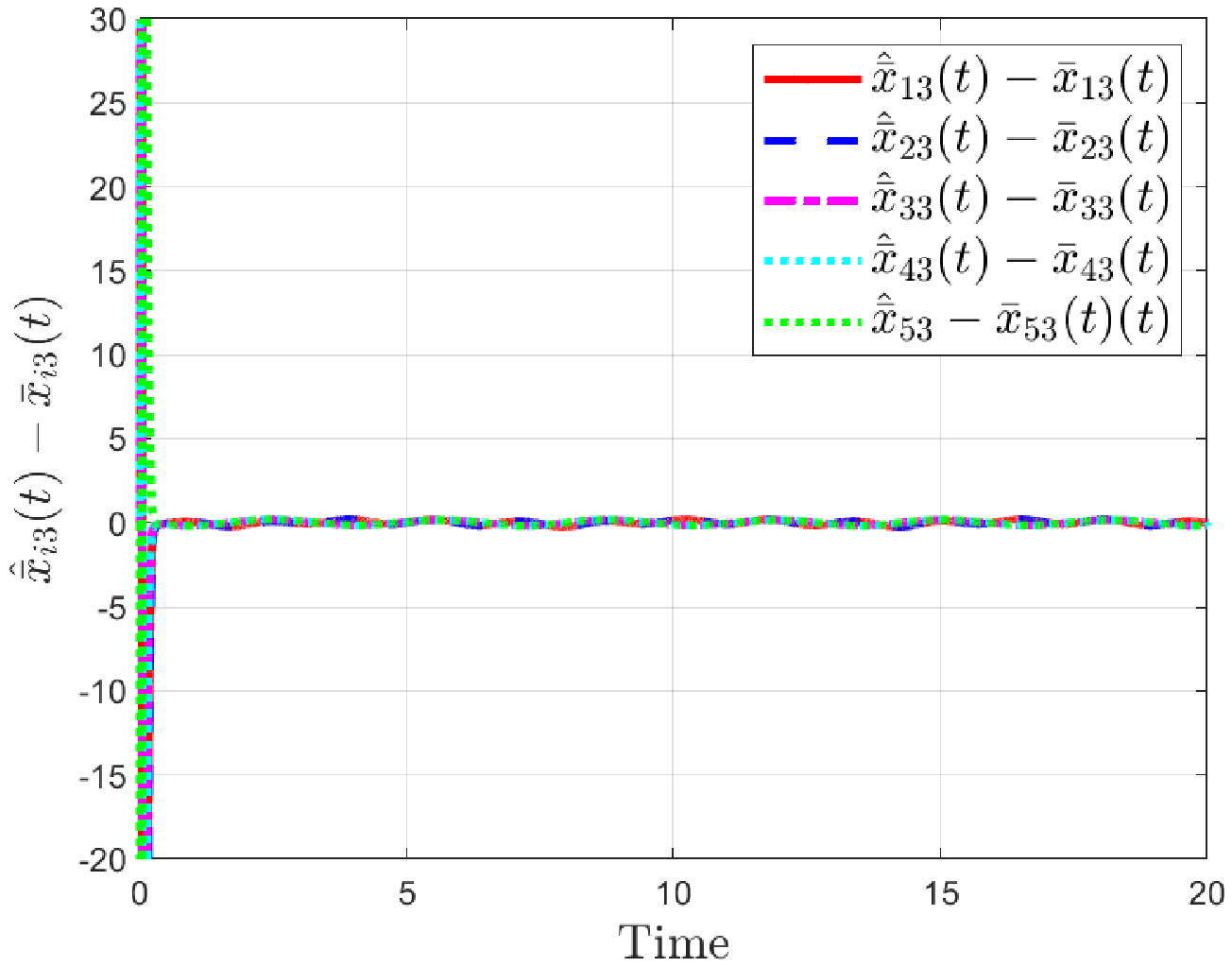}}
	\caption{The effects of output consensus, state bounedness, and estimation of actual total disturbances.}\label{trackingeffect3}
\end{figure}

\begin{figure}[!htb]\centering
	\subfigure[The trajectories of tracking errors of outputs]
	{\includegraphics[width=4.3cm,height=4cm]{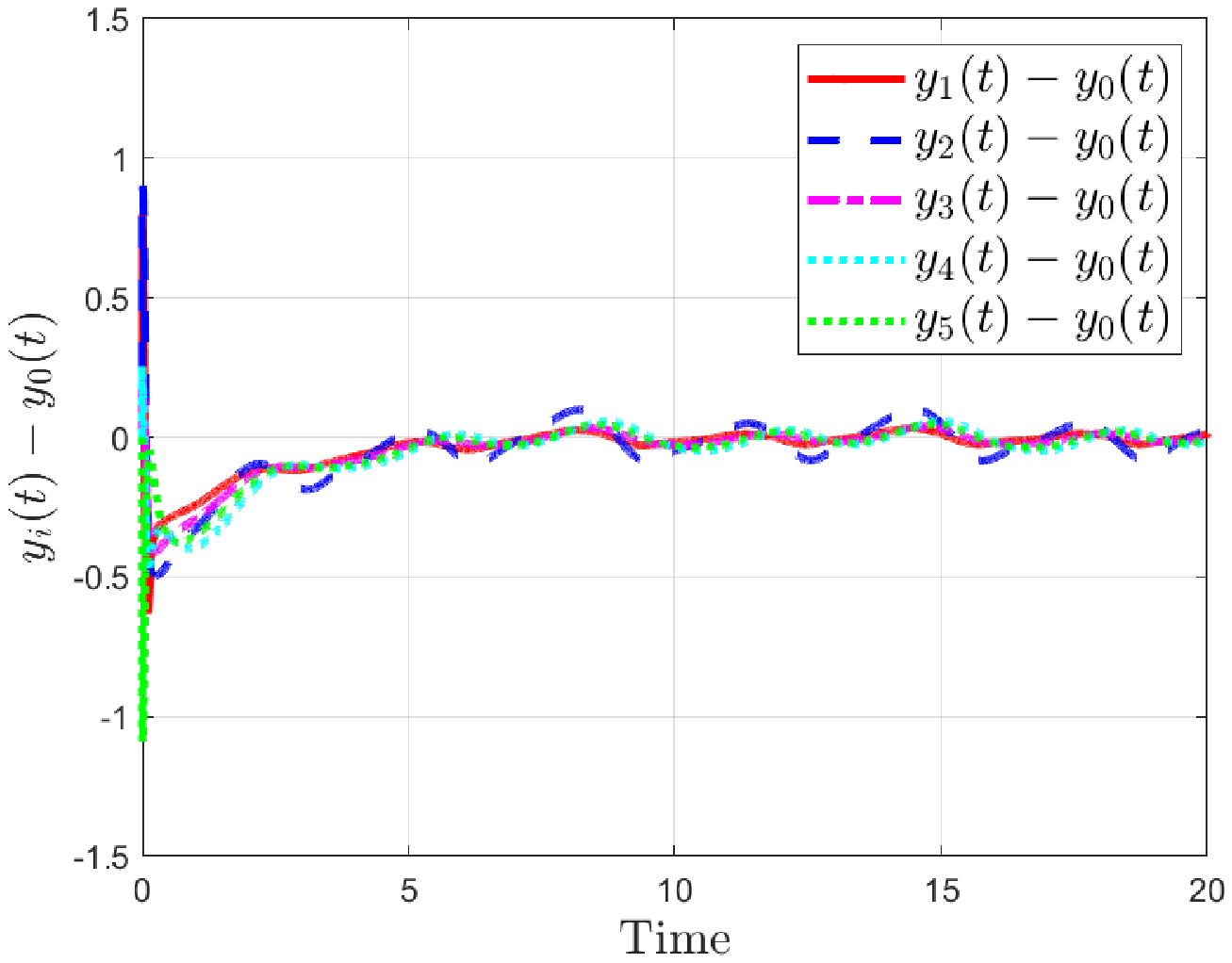}}
	\subfigure[The trajectories of the second state of followers]
	{\includegraphics[width=4.3cm,height=4cm]{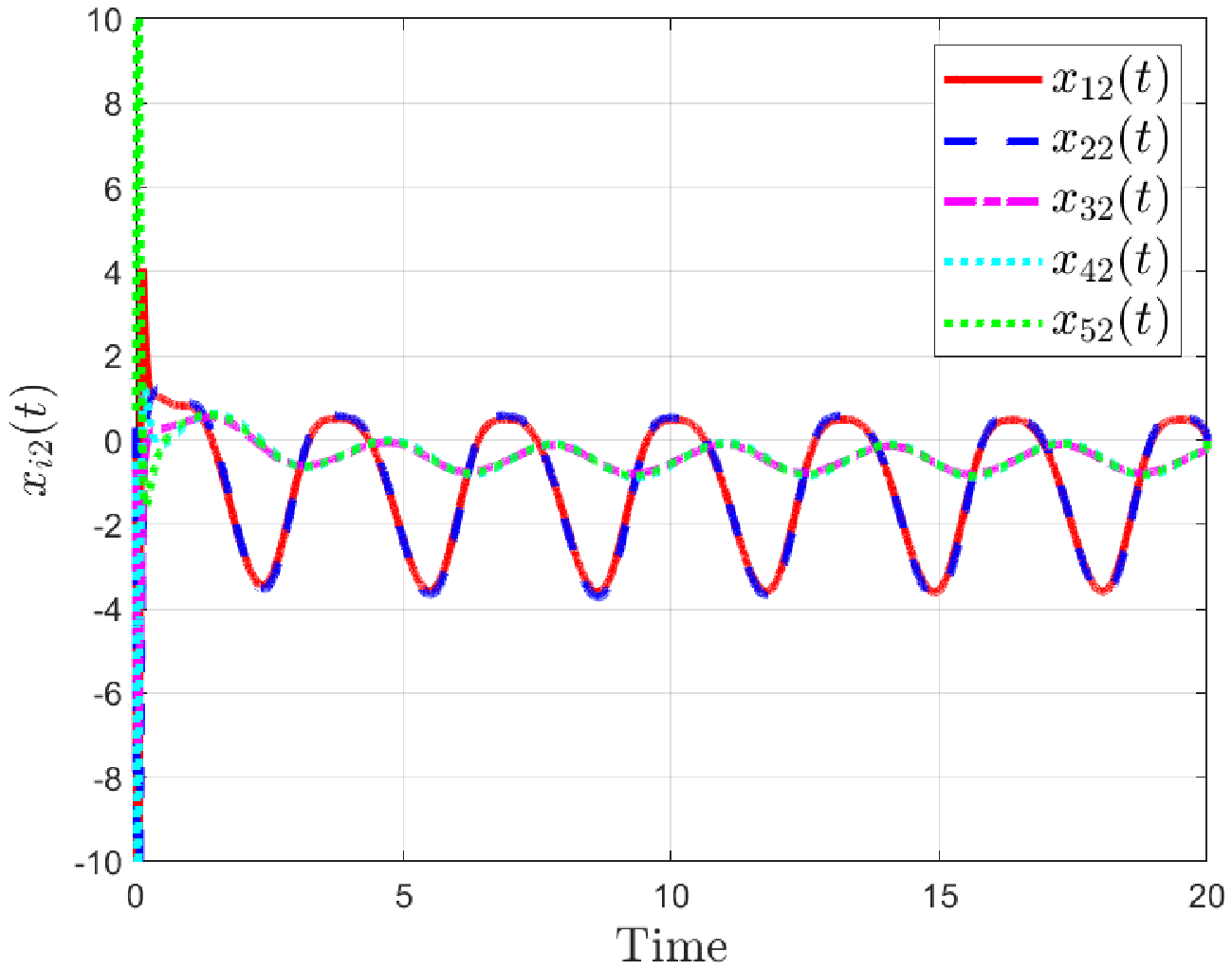}}
	\subfigure[The trajectories of estimation errors of actual total disturbances]
	{\includegraphics[width=4.3cm,height=4cm]{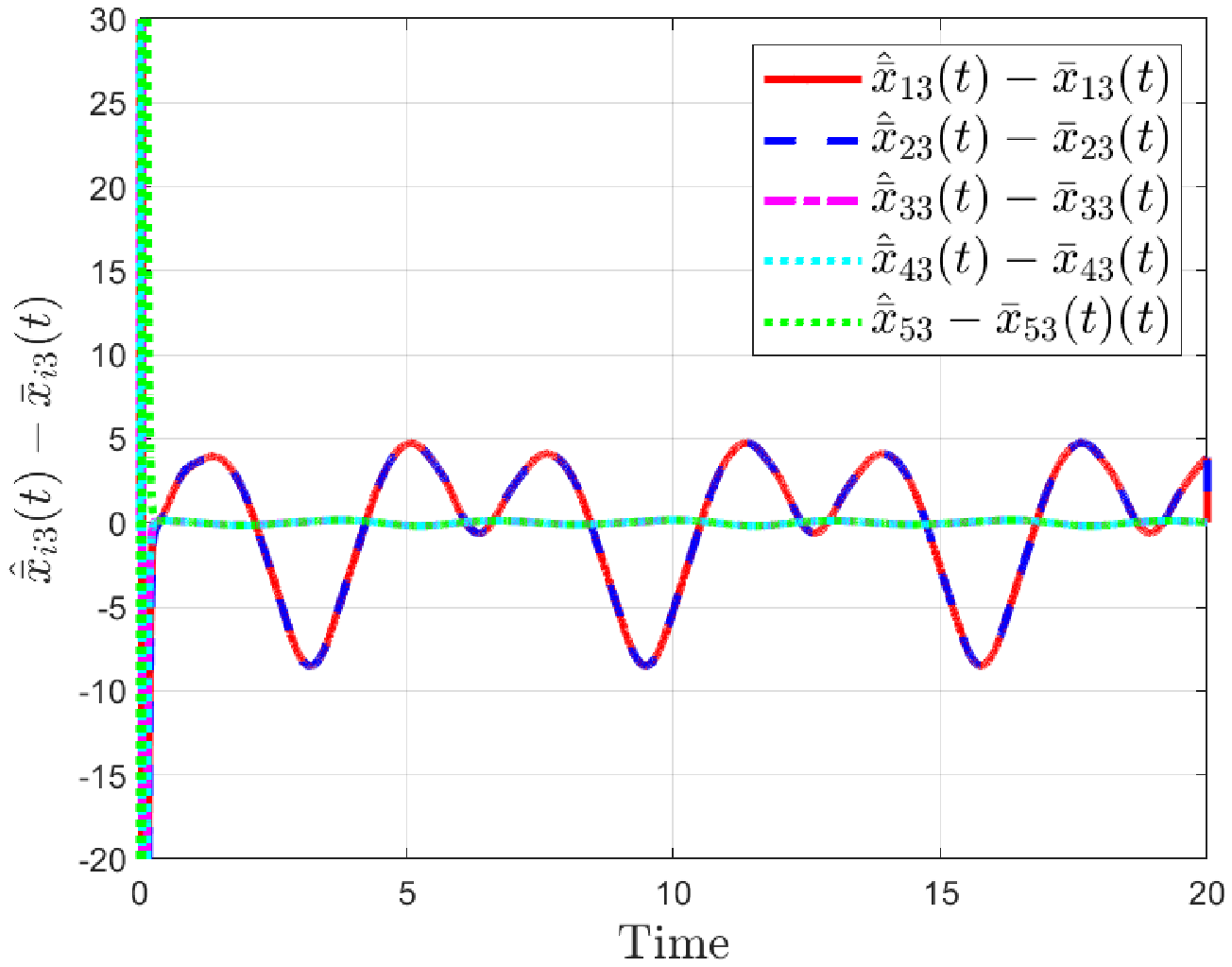}}
	\caption{The effects of output consensus, state bounedness, and estimation of actual total disturbances.}\label{trackingeffect4}
\end{figure}

\section{Concluding remarks}\label{se5}
This paper addresses the practical output consensus and disturbance rejection for
a class of heterogeneous high-order leader-follower MASs with mismatched disturbances, mismatched uncertainties,
and measurement noises in large scale. The network topology is directed and
containing a directed spanning tree. A set of ESOs
are designed using only the output measurement of each agent are
designed to estimate the actual total disturbance of each agent in real time,
and then the ADRC consensus protocols based on ESOs are designed, guaranteing
 that the outputs of all followers can track practically the output of the leader
and all the states of the leader-follower MASs are bounded. Finally, the availability
of the ADRC consensus protocols and the rationality of the theoretical result
are confirmed by some numerical simulations.

%
%

\ifCLASSOPTIONcaptionsoff
  \newpage
\fi



%

%






\end{document}